\documentclass[letterpaper, 11pt,  reqno]{amsart}

\usepackage[margin=1.2in,marginparwidth=1.5cm, marginparsep=0.5cm]{geometry}

\usepackage{amsmath, mathrsfs, amssymb,amscd,amsthm,amsxtra, esint}
\usepackage{color}

\usepackage[implicit=true]{hyperref}

\usepackage{cases}%%%
\allowdisplaybreaks[2]

\definecolor{gr}{rgb}   {0.,   0.69,   0.23 }
\definecolor{bl}{rgb}   {0.,   0.5,   1. }
\definecolor{mg}{rgb}   {0.85,  0.,    0.85}
\definecolor{yl}{rgb}   {0.8,  0.7,   0.}
\definecolor{or}{rgb}  {0.7,0.2,0.2}

\definecolor{olive}{rgb}{0.3, 0.4, .1}
\definecolor{fore}{RGB}{249,242,215}
\definecolor{back}{RGB}{51,51,51}
\definecolor{title}{RGB}{255,0,90}
\definecolor{dgreen}{rgb}{0.,0.6,0.}
\definecolor{gold}{rgb}{1.,0.84,0.}
\definecolor{JungleGreen}{cmyk}{0.99,0,0.52,0}
\definecolor{BlueGreen}{cmyk}{0.85,0,0.33,0}
\definecolor{RawSienna}{cmyk}{0,0.72,1,0.45}
\definecolor{Magenta}{cmyk}{0,1,0,0}
\definecolor{dorange}{RGB}{154,118,0}
\definecolor{corange}{RGB}{230, 176, 0}

\newtheorem{theorem}{Theorem} [section]

\newtheorem{lemma}[theorem]{Lemma}
\newtheorem{proposition}[theorem]{Proposition}
\newtheorem{remark}[theorem]{Remark}

\newtheorem{definition}[theorem]{Definition}

\DeclareMathOperator*{\intt}{\int}

\DeclareMathOperator*{\supp}{supp}
\DeclareMathOperator{\med}{med}

\DeclareMathOperator{\MAX}{MAX}

\DeclareMathOperator{\Id}{{\bf Id}}
\DeclareMathOperator{\sgn}{sgn}

\newcommand{\I}{\mathcal{I}}

\newcommand{\noi}{\noindent}
\newcommand{\Z}{\mathbb{Z}}
\newcommand{\R}{\mathbb{R}}

\newcommand{\T}{\mathbb{T}}
\newcommand{\bul}{\bullet}

\let\P= \undefined
\newcommand{\P}{\mathbf{P}}

\newcommand{\EE}{\mathcal{E}}

\renewcommand{\H}{\mathcal{H}}

\renewcommand{\L}{\mathcal{L}}

\newcommand{\F}{\mathcal{F}}

\newcommand{\low}{\textup{low}}
\newcommand{\high}{\textup{res}}

\newcommand{\al}{\alpha}

\newcommand{\dl}{\delta}

\newcommand{\eps}{\varepsilon}

\newcommand{\g}{\gamma}

\newcommand{\s}{\sigma}

\newcommand{\ft}{\widehat}
\newcommand{\Ft}{{\mathcal{F}}}
\newcommand{\wt}{\widetilde}

\newcommand{\dx}{\partial_x}
\newcommand{\dt}{\partial_t}
\newcommand{\dd}{\partial}

\newcommand{\ta}{\theta}

\newcommand{\Gdl}{\mathcal{G}_{\dl} }

\newcommand{\Gd}{\wt{\mathcal{G}}_\dl}

\newcommand{\les}{\lesssim}
\newcommand{\ges}{\gtrsim}

\newcommand{\jb}[1]
{\langle #1 \rangle}

\newcommand{\ind}{\mathbf 1}

\renewcommand{\S}{\mathcal{S}}

\newcommand{\N}{\mathbb{N}}

\newcommand{\NN}{\mathcal{N}}

\newcommand{\uu}{\mathbf{u}}
\newcommand{\vv}{\mathbf{v}}

\newcommand{\BO}{\text{\rm BO} }
\newcommand{\KDV}{\text{\rm KdV} }

\newcommand{\too}{\longrightarrow}

\newtheorem*{ackno}{Acknowledgements}

\numberwithin{equation}{section}
\numberwithin{theorem}{section}

\makeatletter
\@namedef{subjclassname@2020}{\textup{2020} Mathematics Subject Classification}
\makeatother

\begin{document}
\baselineskip = 14pt

\title[ILW on the real line]
%{
%A remark on the solution map for the intermediate long wave equation 
%on the real line}
{On the singular nature of 
shallow-water convergence of the intermediate long wave equation \\on the real line}

\author[A.~Chapouto, 
B.~Harrop-Griffiths,
G.~Li, and T.~Oh]
{Andreia Chapouto, 
Benjamin Harrop-Griffiths,
Guopeng Li,  
and Tadahiro Oh}

\address{
Andreia Chapouto\\
CNRS, Laboratoire de math\'ematiques de Versailles, UVSQ, Universit\'e Paris-Saclay, CNRS, 45 avenue des 
\'Etats-Unis, 78035 Versailles Cedex, France, 
and School of Mathematics, Monash University, VIC 3800, Australia}

\email{andreia.chapouto@monash.edu}

\address{Benjamin Harrop-Griffiths, Department of Mathematics and Statistics, Georgetown
University, DC 20057, USA}

\email{benjamin.harropgriffiths@georgetown.edu}

\address{Guopeng Li, 
School of Mathematics and Statistics, Beijing Institute of Technology, Beijing 100081, China}

\email{guopeng.li@bit.edu.cn}

\address{
Tadahiro Oh, 
School of Mathematics\\
The University of Edinburgh\\
and The Maxwell Institute for the Mathematical Sciences\\
James Clerk Maxwell Building\\
The King's Buildings\\
Peter Guthrie Tait Road\\
Edinburgh\\
EH9 3FD\\
 United Kingdom,
and  School of Mathematics and Statistics, Beijing Institute of Technology, Beijing 100081, China}

\email{hiro.oh@ed.ac.uk}

\subjclass[2020]{35Q35, 35Q53, 76B55}

\keywords{intermediate long wave equation;  Korteweg-de Vries equation;
Benjamin-Ono equation}

\begin{abstract}
We investigate 
regularity properties of the solution map
for the intermediate long wave equation (ILW) on the real line.
More precisely, 
we study the scaled ILW 
which was shown to converge
to the Korteweg-de Vries equation (KdV)
in $L^2(\R)$
in the shallow-water limit
in a recent work by 
the first, third, and fourth authors with T.\,Zhao
(2025).
By decomposing the dynamics into 
the low frequency part and the residual part, 
we show that, when the depth parameter is sufficiently small,  
the solution map for the low frequency part
is analytic  in $L^2(\R)$, 
while the 
 solution map for the residual part fails to be~$C^2$.
Moreover, we establish
shallow-water convergence  in $L^2(\R)$
of  the low frequency dynamics
 to  KdV.
This explains the mechanism of  the regularity gain  of the solution map
in the shallow-water limit.

\end{abstract}

%\date{\today}
%%
%
\maketitle
%

%\vspace{-5mm}

\tableofcontents

%\vspace{-10mm}

%%%%%%%%%%%%%%%%%%%%%%%%%%%%%%%%%%%%%%
%%%%%%%%%%%%%%%%%%%%%%%%%%%%%%%%%%%%%%
%%%%%%%%%%%%%%%%%%%%%%%%%%%%%%%%%%%%%%
%%%%%%%%%%%%%%%%%%%%%%%%%%%%%%%%%%%%%%

\newpage

\section{Introduction}
\label{SEC:1}

%\subsection{Intermediate long wave equation}
%\label{SUBSEC:1.1}

We consider  the intermediate long wave equation (ILW)
on the real line:
\begin{align}
\dt u - \mathcal{G}_{\dl}\dx^2 u =\dx(u ^2), 
\label{ILW1}
\end{align}

\noi
where
the unknown $u$ 
denotes the amplitude of the internal wave at the interface
 of a stratified fluid of 
finite depth $0 < \dl < \infty$; 
see \cite{joseph, KKD}.
Here, 
the  operator $\Gdl $ in \eqref{ILW1}
is defined by
\begin{align}
\ft{\Gdl  f}(\xi) = 
-  i\big(\coth(\dl \xi )  - \dl^{-1} \xi^{-1}\big)
\ft f(\xi).
\label{GG1}
\end{align}

The ILW equation \eqref{ILW1}
 serves as an ``intermediate'' equation of finite depth $ \dl \in (0, \infty)$,
bridging 
(i)~the Benjamin-Ono equation (BO), modeling fluid of infinite depth  ($\dl = \infty$):
\begin{align}
\dt u -
 \H \partial_x^2 u  =  \dx (u^2), 
\label{BO}
\end{align}

\noi
where $\H$ denotes the Hilbert transform, 
 and  
(ii)~the Korteweg-de Vries equation, modeling shallow water ($\dl = 0$);
see \eqref{KDV} below.
Moreover,  \eqref{ILW1} is a  completely integrable 
 dispersive PDE
 with an infinite number of conservation laws \cite{Kuper, KSA, KAS82,  
 Matsu, 
 S89}
  and Lax pair structures  \cite{GK, HKV}, thus attracting a wide range of attention;
see~\cite{S19, KS21} for an overview of the subject and the references therein.
In recent years, there have been intensive research activities
 from both deterministic and statistical viewpoints;
see  \cite{ABFS, MST, MV, MPV,  
MPS, Li24, LOZ, IS,
CLOP, CFLOP, FLZ, LP, CLO, GL,HKV, CLOZ} for  well-\,/\,ill-posedness results
as well as  results on 
deep-water  ($\dl \to \infty$)
and shallow-water ($\dl \to 0$)
convergence;
see also 
\cite{CGLLO, GLLO1, GLLO2}.

In studying
 the shallow-water limit ($\dl\to0$), 
we need to magnify the fluid amplitude  by a factor $\sim \frac 1\dl$
 to observe any meaningful limiting behavior.
This leads to  the following scaling~\cite{ABFS}:
\begin{equation*}
v(t,x)  = \tfrac3\dl u\big(\tfrac3\dl t, x\big),
%\label{scale1}
\end{equation*}

\noi
which transforms \eqref{ILW1} to  the following scaled ILW equation:
\begin{align}
\dt v   -    \Gd   \dx^2 v= \dx(v^2), 
\qquad \text{where }\ \Gd := \frac3\dl \Gdl.
\label{sILW1}
\end{align}

\noi
Then, under suitable assumptions, 
the scaled ILW~\eqref{sILW1} is known to  converge to the 
following 
Korteweg-de Vries equation (KdV)
in the shallow-water limit ($\dl\to 0$)
\cite{ABFS, Li24, LOZ, CLOZ}:
\begin{align}
\dt v + \dx^3 v = \dx(v^2) . 
\label{KDV}
\end{align}

\noi
In particular, in a recent work \cite{CLOZ}, 
the first, third, and fourth authors with T.\,Zhao
established shallow-water convergence
of the scaled ILW \eqref{sILW1}
to KdV \eqref{KDV} in $L^2$
on both the real line and the circle $\T = (\R/\Z)$.
%for both $\M = \R$ and $\M = \T = (\R/\Z)$.

\medskip

Our main goal in this paper is
to investigate regularity properties
of the solution map for the scaled ILW \eqref{sILW1}
posed on the real line; see Remark~\ref{REM:circle}
for the periodic case.

It has been known for some time that the solution map exhibits a gain of regularity in the shallow water limit.
On the one hand, Molinet, Saut, and Tzvetkov showed in \cite{MST}
that the solution map 
is not $C^2$-differentiable in $H^s(\R)$
for any $s \in \R$.
On the other hand, 
 the scaled ILW \eqref{sILW1}
 converges 
to KdV \eqref{KDV} in the shallow-water limit,
whose solution map is known to be analytic
in $H^s(\R)$ for $s \ge - \frac 34$; see \cite{BO93, KPV96, Kishi, Guo}.
Our main result, Theorem~\ref{THM:1}, explains how this regularity gain occurs.

We recall that the scaled ILW \eqref{sILW1}
behaves like KdV 
for frequencies $|\xi|\les \dl^{-1}$
(see, for example, Lemma \ref{LEM:res}\,(i)). 
Consequently, we expect that 
the low frequency part of the scaled ILW dynamics
enjoys a better regularity property.
With this in mind, given $0 < \dl \le1$,
we  split the scaled ILW dynamics
into a low frequency part
and a residual part.
Namely, we write a solution $v$ to the scaled ILW \eqref{sILW1}
with initial data $\phi \in L^2(\R)$
as
\begin{align}
v = v^\low + v^\high, 
\label{sILW1a}
\end{align}

\noi
where 
the low frequency part $v^\low$ satisfies
\begin{align}
\begin{cases}
\dt v^\low   -     \Gd   \dx^2 v^\low= \P_{ \dl^{-1}} \dx\big((v^\low)^2\big)\\
v^\low|_{t = 0} = \P_{ \dl^{-1}} \phi
\end{cases}
\label{sILW2}
\end{align}

\noi
and
the residual part $v^\high$ satisfies
\begin{align}
\begin{cases}
\dt v^\high   -     \Gd   \dx^2 v^\high= \dx\big((v^\high)^2\big)
+ 2\dx(v^\high v^\low)
+  \P_{ \dl^{-1}}^\perp \dx\big((v^\low)^2\big)\\
v^\high|_{t = 0} = \P_{ \dl^{-1}}^\perp \phi, 
\end{cases}
\label{sILW3}
\end{align}

\noi
respectively.
Here, $\P_{N}$ denotes the Dirichlet projector onto spatial frequencies $\{|\xi|\leq N\}$
 %(i.e., the Fourier multiplier \(\ind_{|\xi|\le N}\)) 
 and $\P_{N}^\perp = \Id - \P_{N}$. 
 When $\dl = 0$, 
we set $\P_{\dl^{-1}}|_{\dl = 0} = \Id$.
 In particular, 
 we have \(v^\low = \P_{\dl^{-1}}v^\low\) and thus 
\begin{align}
\supp \ft {v^\low}(t, \cdot\,) \subset \{|\xi|\le  \dl^{-1}\}
\label{low1}
\end{align}
for any $t \in \R$.
An immediate consequence is that the \(L^2\)-norm of \(v^\low\) is conserved, which yields global well-posedness of \eqref{sILW2} in $H^s(\R)$ for any $s\geq 0$; see Remark~\ref{REM:GWP1}.
In the following, 
we often refer to the low frequency dynamics \eqref{sILW2}, 
including the case $\dl = 0$, 
with the understanding that, 
when $\dl = 0$,  
it corresponds to KdV \eqref{KDV}
with initial data $v|_{t= 0} = \phi$.

We now state our main result.

\begin{theorem}\label{THM:1}
\textup{(i)}
Let $0 \le \dl \le 1$ and  $s \ge 0$.  Then, given 
any $\phi \in H^s(\R)$, 
there exists small $T = T(\|\phi\|_{H^s})> 0$, 
independent of $0 \le \dl \le 1$, 
such that 
the  map, sending $\phi \in H^s(\R)$
to the solution $v^\low  \in C([0, T]; H^s(\R))$
to \eqref{sILW2}
with initial data  $v^\low|_{t = 0} =\P_{ \dl^{-1}} \phi$, 
is analytic at $\phi$ with respect to the $H^s$-regularity.

\medskip

\noi
\textup{(ii)}
Let $0 < \dl \le 1$ and $s \in \R$.  Then, given any $T > 0$, 
the  map, sending $\phi \in H^s(\R)$
to the solution $v^\high  \in C([0, T]; H^s(\R))$
to \eqref{sILW3}, a priori defined for $s \ge 0$, 
is not $C^2$-differentiable at the zero function with respect to the $H^s$-regularity.

\medskip

\noi
\textup{(iii)}
As $\dl \to 0$, 
the solution $v^\low$ to \eqref{sILW2}
with initial data  $v^\low|_{t = 0} =  \P_{ \dl^{-1}} \phi$
converges to 
the solution $v$ to \eqref{KDV}
with initial data  $v|_{t = 0} =  \phi$
in $C(\R; H^s(\R))$, 
where
$C(\R; H^s(\R))$ is endowed with the compact-open topology in time.

\end{theorem}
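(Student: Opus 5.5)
We treat the three parts separately, using a Fourier restriction norm approach adapted to the scaled ILW for (i) and (iii) and an explicit second-iterate computation for (ii).

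\emph{Part (i).} We set up the Duhamel formulation of \eqref{sILW2} in a Bourgain space $X^{s,b}_\dl$ associated with the dispersion relation $p_\dl(\xi) = -\tfrac{3}{\dl}\xi^2\big(\coth(\dl\xi)-\tfrac{1}{\dl\xi}\big)$ of the scaled ILW, with $b=\frac12+$ (or slightly below and above, as needed). The key input is a bilinear estimate
\[
\big\|\P_{\dl^{-1}}\dx(uv)\big\|_{X^{s,b-1}_\dl}\les \|u\|_{X^{s,b}_\dl}\|v\|_{X^{s,b}_\dl}
\]
for $u,v$ with Fourier support in $\{|\xi|\le\dl^{-1}\}$, uniformly in $0\le\dl\le1$. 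On this frequency region, Lemma~\ref{LEM:res}\,(i) gives a KdV-type resonance lower bound $|p_\dl(\xi)-p_\dl(\xi_1)-p_\dl(\xi_2)|\ges|\xi\xi_1\xi_2|$ uniformly in $\dl$. With this, the standard Kenig--Ponce--Vega/Bourgain argument at $s\ge0$ (which has room to spare, since KdV works down to $-\frac34$) goes through, using only that the maximal modulation dominates $|\xi\xi_1\xi_2|$ and an $L^4$-Strichartz-type estimate (or a direct Cauchy--Schwarz counting argument). A contraction argument on $[0,T]$ with $T=T(\|\phi\|_{H^s})$ uniform in $\dl$ then yields a solution map given by a convergent power series in $\phi$: the Picard iterates are multilinear operators bounded by $C^k$. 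Hence the map is analytic. Since $\P_{\dl^{-1}}$ is linear and bounded, the map from $\phi$ is analytic as well.

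\emph{Part (ii).} We argue by contradiction in the style of Molinet--Saut--Tzvetkov. We consider data $\gamma\phi$ and differentiate twice at $\gamma=0$. Since the data of $v^\low$ is $\gamma\P_{\dl^{-1}}\phi$, the second derivative of $v^\high$ is the Duhamel term
\[
\int_0^t S_\dl(t-t')\Big[2\dx\big((S_\dl\phi_h)^2\big)+4\dx(S_\dl\phi_h\, S_\dl\phi_l)+2\P^\perp_{\dl^{-1}}\dx\big((S_\dl\phi_l)^2\big)\Big]dt',
\]
where $\phi_h=\P^\perp_{\dl^{-1}}\phi$ and $\phi_l=\P_{\dl^{-1}}\phi$. $C^2$ would force
\[
\sup_t\|\,\cdot\,\|_{H^s}\les\|\phi\|_{H^s}^2 .
\]
We choose $\phi_l=0$ and $\widehat{\phi}=\alpha^{-1/2}N^{-s}\ind_{I_N}+\alpha^{-1/2}\ind_{[\alpha,2\alpha]}$, with $N\gg\dl^{-1}$ (so that $I_N=[N,N+\alpha]$ lies in the high region). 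The remaining $\ind_{[\alpha,2\alpha]}$ piece sits at low frequency, so we must instead place it near $\pm$ a frequency of size $\ge\dl^{-1}$, e.g.\ on $[-N',-N'+\alpha]$ with $N'$ fixed $>\dl^{-1}$. The high-low interaction then produces output near $N-N'$, still in the high region.

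For $|\xi|\gg\dl^{-1}$ we have $p_\dl(\xi)\approx -\tfrac3\dl\xi|\xi|+\tfrac3{\dl^2}\xi$, a BO-like symbol. The resonance function for the $N$ and $N'$ interaction is therefore $\sim \dl^{-1}NN'$ up to a linear correction that cancels, which is bounded for fixed $N'$. Choosing $\alpha$ small, $t$ fixed, we get a lower bound $\ges N\,\alpha^{1/2}\cdot N^{-s}\cdot N^{s}\cdot t$ from the derivative. Letting $N\to\infty$ (with $N'\sim$ fixed, or $N'\sim N$ for $s<0$) contradicts the bound. This mirrors the BO computation, and the work is in checking that the phase stays small.

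\emph{Part (iii).} We write $v^\low-v$. Using the uniform bounds of part (i) together with the uniform $L^2$ conservation for both flows, we reduce to short time intervals and iterate. In the difference equation, the linear propagators differ by $p_\dl(\xi)-\xi^3$, which is $O(\dl^2|\xi|^5)$-small on $|\xi|\le\dl^{-1}$ and hence $\to0$ on frequencies $|\xi|\le K$. The high-frequency tail $\P_K^\perp\phi$ is handled by smallness in $H^s$ and the uniform Lipschitz bound. The projector terms $\P^\perp_{\dl^{-1}}$ vanish as $\dl\to0$ by dominated convergence.

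The main obstacle is the uniform-in-$\dl$ bilinear estimate in (i), specifically the resonance lower bound near $|\xi|\sim\dl^{-1}$, where the symbol transitions from KdV to BO behavior. For (ii), the delicate part is keeping the constructed interaction inside the residual frequency region.
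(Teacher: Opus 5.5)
\paragraph{Part (ii): the construction cannot work.} Your argument breaks down when you set $\phi_l=0$ and move the small bump to $[-N',-N'+\al]$ with $N'>\dl^{-1}$.

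\begin{itemize}
\item Then all three frequencies exceed $\dl^{-1}$, and Lemma~\ref{LEM:res}\,(ii) gives $|\wt\Xi_\dl(\bar\xi)|\sim\dl^{-1}NN'$. This is not bounded for fixed $N'$; it grows with $N$. The linear part of the symbol cancels, but what remains is essentially $\frac3\dl\Xi_{\BO}$.
\item So the expansion $-(e^{-it\wt\Xi_\dl}-1)/\wt\Xi_\dl=it+o(1)$, on which your lower bound $\ges N\al^{1/2}t$ rests, is unavailable. Using $|e^{-it\wt\Xi_\dl}-1|\le2$ instead, the output near $N-N'$ has $H^s$-norm $O(\dl\,\al^{1/2}/N')$ with your normalization. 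This stays bounded as $N\to\infty$, so there is no contradiction.
\item This is structural, not a matter of tuning parameters. If $|\xi|,|\xi_1|,|\xi_2|>\dl^{-1}$, then $\xi_{\min}>\dl^{-1}$, so $|\wt\Xi_\dl|\ges\dl^{-2}\xi_{\max}$ and $|\xi|/|\wt\Xi_\dl|\les\dl^2$. The phase can never be made small.
\end{itemize}

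The real error is the premise that $\phi_l$ must vanish. The map in (ii) is $\phi\mapsto v^\high$ on all of $H^s(\R)$. The piece $\P_{\dl^{-1}}\phi$ enters $v^\high$ through $v^\low$ in the coupling $2\dx(v^\high v^\low)$ of \eqref{sILW3}, that is, through the term $4\dx(S_\dl\phi_h\,S_\dl\phi_l)$ you wrote down and then discarded. This low $\times$ high $\mapsto$ high term is the entire mechanism:
\begin{itemize}
\item keep $\al^{-1/2}(\ind_{[\al,2\al]}+\ind_{[-2\al,-\al]})$ inside $\{|\xi|\le\dl^{-1}\}$;
\item add $\al^{-1/2}N^{-s}(\ind_{[N,N+\al]}+\ind_{[-N-\al,-N]})$;
\item choose $\al$ with $\dl^{-1}\al N\sim N^{-\ta}$.
\end{itemize}
Then $|\wt\Xi_\dl|\sim\dl^{-1}\al N\sim N^{-\ta}$ and the Taylor expansion is legitimate. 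Combined with $\ind_{a+I}*\ind_{b+I}\ges|I|\,\ind_{a+b+I}$, the second derivative at any fixed $t\ne0$ has $H^s$-norm $\ges_t\al^{1/2}N\sim\dl^{1/2}N^{(1-\ta)/2}\to\infty$, while $\|\phi\|_{H^s}\sim1$. This works for every $s\in\R$ at once: the weights $N^{\pm s}$ cancel and the low bump carries weight $\jb{\xi}^s\sim1$. No separate treatment of $s<0$ is needed.

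\paragraph{Part (i): the listed ingredients are not enough.} The resonance bound of Lemma~\ref{LEM:res}\,(i) plus the $L^4$-Strichartz estimate only covers $|\xi|\les1$ and $|\xi|\sim|\xi_1|\sim|\xi_2|$. Consider the high $\times$ low regime $|\xi_1|\les\xi^{-2}\ll1\ll|\xi|\sim|\xi_2|\le\dl^{-1}$. There the resonance is $O(1)$, all modulations may be $O(1)$, and the derivative $|\xi|$ is not recovered from $\MAX$ (cf.\ Remark~\ref{REM:low3}).

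Your parenthetical Cauchy--Schwarz argument is the right tool there, but it needs two things:
\begin{itemize}
\item the change of variables $\mu=\tau-\wt p_\dl(\xi_1)-\wt p_\dl(\xi-\xi_1)$;
\item the Jacobian bound $|\dd_{\xi_1}\mu|\sim|\xi(\xi-2\xi_1)|\ges\xi^2$, with the sign of $\xi(\xi-2\xi_1)$, uniformly on $\{|\xi|,|\xi_1|,|\xi_2|\le\dl^{-1}\}$.
\end{itemize}
That Jacobian bound, not the resonance bound you single out, is the new input for uniformity in $\dl$. The paper derives it from the series \eqref{LL1} in \eqref{BX1}--\eqref{BD4}. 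It also follows from $\dd_{\xi_1}\mu=\wt p_\dl'(\xi_2)-\wt p_\dl'(\xi_1)$, the evenness of $\wt p_\dl'$, and $\wt p_\dl''(\eta)\sim\eta$ for $0\le\eta\le\dl^{-1}$.

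\paragraph{Part (iii): a different route, with one missing step.} Your density-plus-uniform-Lipschitz route differs from the paper's, which uses uniform equicontinuity of the solutions (Lemma~\ref{LEM:equi}) and then an interaction-representation comparison at a fixed cutoff $\P_N$. Your route can be made to work, but as written it conflates data and solution: the solution with data $\P_K\phi$ is not supported in $\{|\xi|\le K\}$. To make the symbol error $|\xi|^3h(\dl,\xi)\le\dl^2|\xi|^5$ and the $\P^\perp_{\dl^{-1}}\dx(v^2)$ term small, you need:
\begin{itemize}
\item uniform-in-$\dl$ bounds on $\|v^\low_\dl\|_{C_TH^\sigma}$ for large $\sigma$, which follow from \eqref{bilin1x} and $L^2$ conservation;
\item a genuine difference estimate, e.g.\ an $L^2$ energy estimate for $v^\low_\dl-v$ followed by interpolation.
\end{itemize}
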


The instability in 
Theorem \ref{THM:1}\,(ii)
comes from 
the `low $\times$ high $\mapsto$ high' interaction, 
where high frequencies are much larger than $\dl^{-1}$
(tending to $\infty$), and thus  vanishes in the shallow-water limit.
Hence, Theorem \ref{THM:1}
explains the mechanism of 
 the regularity gain of the solution map in the shallow-water limit, 
from  not being $C^2$ to being analytic.
It also explains how this regularity gain 
propagates in frequency,
namely, from the low frequency dynamics 
supported on $\{|\xi|\le \dl^{-1}\}$ 
to KdV.
This illustrates the singular nature
of the 
 shallow-water limit;
 see Remark \ref{REM:sing}
 for a further discussion
 on various aspects of the singular nature
 of the shallow-water limit.

In light of \eqref{low1}
and the following (crude) product estimate:
\[ \big\|\P_{\dl^{-1}}\big((\P_{\dl^{-1}}f)(\P_{\dl^{-1}} g) \big)    \big\|_{H^s}
\les \dl^{-s - \frac 12 }\| \P_{\dl^{-1}}f \|_{H^s}
\| \P_{\dl^{-1}}g \|_{H^s}
\]

\noi
 for $s\ge 0$ and $0 < \dl \le 1$, 
we can easily construct a solution to 
 the low frequency dynamics~\eqref{sILW2} 
via Picard iterates, 
which provides analyticity 
of the map $\phi \mapsto v^\low$
 for each fixed \(0<\dl\leq 1\).
However, such a construction 
of a solution is restricted
to a time interval $[0, T_\dl]$
for some $T_\dl > 0$ which tends to $0$ as $\dl \to 0$.
 The novelty in Theorem~\ref{THM:1}\,(i) is 
{\it uniformity in $\dl$} of  this analytic behavior.
The proof of this fact
follows from 
a standard contraction argument
for proving local well-posedness, 
once we prove a key bilinear estimate
(Lemma \ref{LEM:bilin1}).
The main observation here is that 
 the low frequency dynamics
\eqref{sILW2}
enjoys the same multilinear dispersion as KdV
(see Lemma~\ref{LEM:res}\,(i)).
Then, 
the key  bilinear estimate
(Lemma~\ref{LEM:bilin1})
follows
from 
a slight modification of the argument by Kenig, Ponce, and Vega \cite{KPV96}. 
Here, an additional difficulty
appears due to the more complicated
nature of the resonance function for the scaled ILW
(see \eqref{LL1})
in carrying out a change of variables
(see \eqref{BX2}-\eqref{BX6}).
Theorem \ref{THM:1}\,(ii)
follows from a straightforward adaptation of 
the argument by 
Molinet, Saut, and Tzvetkov~\cite{MST}
by considering a G\^ateaux derivative 
(at the zero function)
of the solution map
for the residual dynamics \eqref{sILW3}.
We present proofs of Theorem \ref{THM:1}\,(i)
and (ii) in 
Subsection~\ref{SUBSEC:low1}
and Section~\ref{SEC:high}, respectively.

In proving Theorem \ref{THM:1}\,(iii), 
we first establish
 the uniform (in small $\dl> 0$)
equicontinuity  of solutions to 
the low frequency dynamics
(Lemma \ref{LEM:equi})
via a semilinear argument (a contraction-type argument)
{\it without} relying on the complete integrability of
the (scaled) ILW.
Compare this with the situation in \cite{CLOZ}, 
where the complete integrability of the equation 
played a crucial role
in establishing (a weaker version of)
the uniform (in small $\dl > 0$) equicontinuity
of solutions
to the scaled ILW \eqref{sILW1}.
Once we prove Lemma \ref{LEM:equi}, 
the proof of Theorem \ref{THM:1}\,(iii)
reduces to showing convergence
of the ``low frequency part''
of the low frequency dynamics \eqref{sILW2};
see 
 Proposition~\ref{PROP:conv1}
 for details.

\begin{remark}\rm
In \cite{KTz2}, 
Koch and Tzvetkov 
proved 
the failure of local uniform continuity
of the solution map 
for the Benjamin-Ono equation
in $H^s(\R)$
for $s \ge 0$.
By viewing the (original) ILW \eqref{ILW1}
as a perturbation of BO \eqref{BO}
(see \cite{IS, CLOP}), 
it may be possible to  adapt the approach in 
 \cite{KTz2}
to upgrade the failure of $C^2$-regularity 
in Theorem \ref{THM:1}\,(ii)
to  the failure of local uniform continuity
for $s \ge 0$.
For $s < - \frac 12$, the scaled ILW \eqref{sILW1}
is ill-posed in $H^s(\R)$ (see~\cite{CFLOP})
due to the discontinuity of the solution map, 
which may be used to strengthen
Theorem \ref{THM:1}\,(ii).
We, however, do not pursue these issues
for conciseness of the presentation.
We point out that Theorem \ref{THM:1}
suffices to contrast 
different regularity properties
of the low frequency dynamics
and the residual dynamics.

\end{remark}

\begin{remark}\label{REM:circle}\rm

The gain of regularity of the solution map 
for the scaled ILW (Theorem \ref{THM:1})
comes from the BO-like instability 
for the residual dynamics \eqref{sILW3}
due to the `low $\times$ high $\mapsto$ high' interaction.
For the BO equation \eqref{BO}
on the circle, 
Molinet~\cite{Moli2} showed that the solution map
is  analytic
in $H^s_\al(\T)$
for any $s \ge 0$ and $\al \in \R$, 
where 
$H^s_\al(\T)$
denotes the subspace of  $H^s(\T)$
consisting of functions with spatial mean $\al$;
see
\cite[Theorem~1.2]{Moli2}.
Namely, there is no BO instability 
coming from 
the `low $\times$ high $\mapsto$ high' interaction
in the periodic case.
In particular,
the gain of regularity of the solution map 
for the scaled ILW (Theorem \ref{THM:1})
is 
a phenomenon specific to  the real line case.
Lastly, we point out that, 
while the  solution map for BO on the circle is
analytic in $H^s_\al(\T)$ for any $s \ge 0$ and $\al \in \R$, 
 there is no known well-posedness result  via a contraction argument
at this point. 

\end{remark}

\begin{remark}\label{REM:low4}\rm
It is possible to include further frequency interactions
as part of the nonlinearity in the low frequency dynamics \eqref{sILW2};
see Subsection \ref{SUBSEC:low2}
(in particular, Remark \ref{REM:low3}).
We, however, believe that the decomposition 
of the scaled ILW \eqref{sILW1}
into the system \eqref{sILW2}-\eqref{sILW3}
is a natural one where the low frequency 
dynamics converges to the KdV dynamics in the shallow-water limit;
(Theorem \ref{THM:1}\,(iii)).
Moreover, in view of 
the weakly uniform (in small $\dl > 0$) equicontinuity (\cite[Proposition 1.4]{CLOZ}), 
the high frequency part $\P_{ \dl^{-1}}^\perp v$
can be made arbitrarily small by taking $\dl > 0$ sufficiently small, 
and thus adding high frequency contributions
in~\eqref{sILW2} amounts to adding
small perturbations, which do not seem to be of interest.

\end{remark}

\begin{remark}\label{REM:sing}\rm

%Together with the shallow-water convergence results
%\cite{Li24, CLOZ}, 

Theorem~\ref{THM:1}
exhibits a jump in regularity 
of the solution map for the scaled ILW \eqref{sILW1}.
This comes from the singular nature of the shallow-water convergence
(when compared to the deep-water limit).
As observed in \cite{LOZ, CLO},  
this singular nature of the shallow-water limit
also appears in
the convergence of  the conservation laws 
and invariant measures of the scaled ILW
to those of KdV
(exhibiting 
2-to-1 collapses
with regularity jumps and singular supports of measures);
see 
\cite[Remark~1.2]{CLO}.

\end{remark}

\section{Preliminaries}
\label{SEC:2}

\subsection{Notations}
We use $A\les B$ if there exists $C>0$ such that $A \le CB$, and $A\sim B$ if $A \les B$ and $B \les A$. 
We also write $A \ll B$ if $A \le c B$ for some small constant $c>0$.

The ensuing bilinear estimates (Lemmas \ref{LEM:bilin1}
and
\ref{LEM:bilin2})
will involve expressions that depend on three 
(spatial) frequencies, $\xi, \xi_1, \xi_2 \in \R$. 
We denote the absolute values,  $|\xi|$, $|\xi_1|$, and $|\xi_2|$,
arranged in a non-increasing order by 
 $\xi_{\max}$, 
$\xi_{\med}$, and $\xi_{\min}$, respectively. Typically, the frequencies will satisfy the
 condition: \(\xi = \xi_1 + \xi_2\), in which case we  have \(\xi_{\max}\sim \xi_{\med}\).

Given $\dl \ge 0$, 
we denote
by  
\begin{align}
S_\dl(t) = e^{ t \Gd \dx^2}
\label{lin1}
\end{align}

\noi
 the linear propagator for the scaled ILW \eqref{sILW1}
with the understanding that 
$S_0(t) = e^{-t\dx^3}$ 
is 
 the linear propagator for KdV \eqref{KDV}
 when $\dl = 0$;
 see \eqref{res13}.

\subsection{On the resonance functions}
\label{SUBSEC:res}

In this subsection, we recall the basic properties
of the resonance function for the scaled ILW \eqref{sILW1}.

We first recall that the resonance functions
for KdV \eqref{KDV} and BO \eqref{BO} are given by
\begin{align}
\Xi_\KDV (\bar \xi) &= \xi^3-  \xi_1^3 - \xi_2^3,\notag \\
\Xi_\BO (\bar \xi) &=  |\xi| \xi -  | \xi_1 |\xi_1 - |\xi_2| \xi_2,\label{res2}
\end{align}

\noi
respectively, 
where \(\bar\xi = (\xi,\xi_1,\xi_2)\).
Under $\xi = \xi_1+ \xi_2$, a direct computation shows that 
\begin{align}
\Xi_\KDV (\bar \xi) &= 3\xi \xi_1 \xi_2,\label{res1}\\
|\Xi_{\BO}(\bar \xi)| &= 2\xi_{\med}\xi_{\min}\label{res3}.
\end{align}

Given $0 < \dl < \infty$, 
we now consider the resonance function $\Xi_\dl (\bar \xi)$ for  ILW \eqref{ILW1}, given by 
\begin{align}
\Xi_\dl (\bar \xi)
= \Xi_\dl(\xi,\xi_1,\xi_2) =  p_\dl(\xi)  -   p_\dl(\xi_1) -  p_\dl(\xi_2), 
\label{res4}
\end{align}

\noi
where  $p_\dl(\xi)$ denotes the multiplier for $-i \Gdl \dx^2$:
\begin{align}
p_\dl(\xi)=
\xi^2 \coth(\dl \xi) -\frac{\xi}{\dl} .
\label{res5}
\end{align}

\noi
See \eqref{GG1}.
Then, under $\xi = \xi_1 + \xi_2$, we have
\begin{align}
\Xi_\dl (\bar \xi)
= 
 \xi^2 \coth(\dl \xi)
- \xi_1^2 \coth(\dl \xi_1)
- \xi_2^2 \coth(\dl \xi_2).
\label{res6}
\end{align}

\noi
As in the proof of \cite[Lemma 2.3]{CLOP}, we use the identity
\[
\xi^2 \coth (\dl \xi) - |\xi|\xi = \sgn(\xi) \cdot \frac {2\xi^2} {e^{2|\dl \xi|} - 1}
\]
with the fact that  \(x^2\lesssim e^{2x}-1\) for any \(x\geq 0\) and l'H\^opital's rule (for $\xi =0$) to obtain
\begin{align}
\xi^2 \coth (\dl \xi) - |\xi|\xi = O( \dl^{-2})
\label{res7}
\end{align}

%\noi
%By applying 
%\cite[(2.7)]{CLOP}, 
%the fact that  $x^{\al +1} \le C_\al( e^{2x} - 1)$ for any $x \ge  0$
%and $\al \ge 0$, 
%and l'H\^opital's rule (for $\xi =0$)
%we have 
%\begin{align}
%\xi^2 \coth (\dl \xi) - |\xi|\xi = \sgn(\xi) \cdot \frac {2\xi^2} {e^{2|\dl \xi|} - 1}
%= O( \dl^{-2})
%
%\end{align}

\noi
for any $\xi \in \R$, as $\dl \to \infty$.
Hence, from \eqref{res2}, \eqref{res6},  and \eqref{res7}, 
we have
\begin{align}
\Xi_\dl(\bar \xi) =  \Xi_\BO(\bar \xi) + O(\dl^{-2}), 
\label{res8}
\end{align}

\noi
 as $\dl \to \infty$, 
uniformly in $\bar \xi = (\xi, \xi_1, \xi_2) \in \R^3$
such that $\xi = \xi_1 + \xi_2$.

Lastly, given $0 < \dl < \infty$, we consider the resonance function 
$\wt \Xi_\dl (\bar \xi)$  for the scaled ILW~\eqref{sILW1}, given by 
\begin{align}
\begin{split}
\wt \Xi_\dl (\bar \xi)
& = \wt \Xi_\dl(\xi,\xi_1,\xi_2) 
=  \wt p_\dl(\xi)  -  \wt  p_\dl(\xi_1) -  \wt p_\dl(\xi_2), 
\end{split}
\label{res9}
\end{align}

\noi
where 
$\wt p_\dl(\xi)$
is the 
 multiplier  for $-i\Gd \dx^2$:
 \begin{align}
\wt p_\dl(\xi)
&  =   \frac 3 \dl p_\dl(\xi) 
= \xi \L_\dl(\xi),  %=  \xi^3 \big(1- h(\dl, \xi)  \big), 
\label{res10}
\end{align}

\noi
where $p_\dl(\xi)$ is as in \eqref{res5}
and  
$\L_\dl(\xi) $ is the multiplier for  $\Gd\dx$ given by
\begin{align}
\L_\dl(\xi)
=  \tfrac 3\dl \big( \xi \coth(\dl \xi) - \dl^{-1}\big)
= 6 \xi^2 \sum_{k=1}^\infty \frac{1}{k^2 \pi^2 + \dl^2 \xi^2}, 
\label{res11}
\end{align}

\noi
where the second equality follows
from 
the Mittag-Leffler expansion
of 
$\pi z \coth (\pi z)$ (see \cite[(11) on p.\,189]{Ahl});
see the proof of 
\cite[Lemma 2.3]{LOZ}.
Let us summarize the basic properties
of $\L_\dl(\xi)$;
see \cite[Lemma 8.2.1]{ABFS}, 
\cite[(the proof of) Lemma 2.3]{LOZ}, 
and \cite[Lemma 2.3]{CLO}.

\begin{lemma}\label{LEM:multip}
Let $0< \dl< \infty$. Then, the following statements hold.

\smallskip
\noi{\rm(i)} $0 \le \L_\dl(\xi) \le \min( \xi^2, \tfrac3\dl |\xi| )$.

\smallskip
\noi{\rm(ii)} For fixed $\xi \in \R$, $\L_\dl(\xi)$ is decreasing in $\dl$, and 
\begin{align}
\lim_{\dl\to 0} \L_\dl(\xi) = \xi^2, \quad \xi  \in \R.
\label{res12}
\end{align}

\smallskip
\noi{\rm(iii)} We can also write $\L_\dl(\xi)$ as follows
\begin{align}
\notag
\L_\dl(\xi) 
=  \xi^2 \big( 1 - h(\dl, \xi) \big), 
\end{align}
where $h(\dl, \xi)$ is given by 
\begin{align}
\label{hdef}
h(\dl, \xi)
&
= 6 \dl^2 \xi^2 \sum_{k=1}^\infty \frac{1}{k^2 \pi^2 (k^2 \pi^2 + \dl^2 \xi^2)} \in [0,1].
%\\
%\sum_{n\in\Z_*} h^2(\dl, \xi) 
%&= \infty
%, 
%\quad 
%\text{and}
%\quad 
%\lim_{\dl\to 0} h(\dl, \xi) =0. 
\end{align}

\noi
In particular, from 
 \eqref{hdef}, we have 
\begin{align}
h(\dl, \xi)  \le \dl^2 |\xi|^2
\label{L3}
\end{align}

\noi
for any $\xi \in \R$.

\end{lemma}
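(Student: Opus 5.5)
The plan is to work entirely from the Mittag-Leffler representation \eqref{res11},
\[
\L_\dl(\xi) = 6\xi^2 \sum_{k=1}^\infty \frac{1}{k^2\pi^2 + \dl^2\xi^2},
\]
which we take as given. Every claim in Lemma~\ref{LEM:multip} then reduces to an elementary manipulation of this series, combined with the identity $\sum_{k\ge1} k^{-2} = \pi^2/6$. We first derive (iii) and then read off (i) and (ii) from it.

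For \textup{(iii)}, we use the partial-fraction identity
\[
\frac{1}{k^2\pi^2+\dl^2\xi^2} = \frac{1}{k^2\pi^2} - \frac{\dl^2\xi^2}{k^2\pi^2(k^2\pi^2+\dl^2\xi^2)}.
\]
Summing over $k\ge 1$ and using $6\sum_{k\ge1} (k^2\pi^2)^{-1} = 1$ gives $\L_\dl(\xi) = \xi^2(1-h(\dl,\xi))$, with $h$ exactly as in \eqref{hdef}.

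We next bound $h$. It is clearly nonnegative. Each term in its defining sum is at most $1/(k^2\pi^2)$ times the factor $\dl^2\xi^2/(k^2\pi^2+\dl^2\xi^2)\le 1$, so $h \le 6\sum_k (k^2\pi^2)^{-1} = 1$. Dropping instead $\dl^2\xi^2$ from the inner denominator gives
\[
h \le 6\dl^2\xi^2 \sum_k (k\pi)^{-4} = 6\dl^2\xi^2\cdot\frac{1}{90} = \frac{\dl^2\xi^2}{15} \le \dl^2\xi^2,
\]
which is \eqref{L3}.

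For \textup{(i)}, nonnegativity of $\L_\dl$ is immediate from the series. The bound $\L_\dl\le\xi^2$ follows from $h\ge0$ in (iii). For $\L_\dl\le \tfrac3\dl|\xi|$ we use the closed form $\L_\dl(\xi) = \tfrac3\dl\big(\xi\coth(\dl\xi)-\dl^{-1}\big)$. With $x=\dl|\xi|$, the inequality reduces to $x\coth x - 1 \le x$, i.e.\ $\coth x \le 1 + 1/x$ for $x>0$. This holds because
\[
\coth x - 1 = \frac{2}{e^{2x}-1} \le \frac{1}{x},
\]
using $e^{2x}-1\ge 2x$. The case $\xi=0$ is trivial.

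For \textup{(ii)}, each term $1/(k^2\pi^2+\dl^2\xi^2)$ is nonincreasing in $\dl>0$, so $\L_\dl(\xi)$ is decreasing in $\dl$ (strictly so when $\xi\ne0$). The limit \eqref{res12} follows from (iii) and \eqref{L3}: $|\L_\dl(\xi)-\xi^2| = \xi^2 h(\dl,\xi) \le \dl^2\xi^4 \to 0$ as $\dl\to0$. Alternatively, monotone convergence in the series gives the same limit.

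None of these steps is a genuine obstacle. The only care points are using the closed form rather than the series for the $\tfrac3\dl|\xi|$ bound, and handling $\xi=0$.
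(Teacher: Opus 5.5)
Your proof is correct. The paper gives no argument of its own for this lemma and instead cites \cite[Lemma 8.2.1]{ABFS}, \cite[Lemma 2.3]{LOZ}, and \cite[Lemma 2.3]{CLO}; your derivation from the Mittag-Leffler series \eqref{res11}, using partial fractions together with $\sum_{k\ge1}k^{-2}=\pi^2/6$, is essentially the argument behind those references. One small remark: the closed form is not actually needed for $\L_\dl(\xi)\le\frac3\dl|\xi|$. Since the summand is decreasing in $k$, comparing the series with $\int_0^\infty(\pi^2u^2+\dl^2\xi^2)^{-1}\,du=(2\dl|\xi|)^{-1}$ gives the same bound directly.
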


In particular, from \eqref{res10} and \eqref{res12}, 
we have
\begin{align}
\lim_{\dl \to 0}
\wt p_\dl(\xi) = \xi^3
\label{res13}
\end{align}

\noi
for each {\it fixed} $\xi \in \R$.
We, however, point out that this convergence
is {\it not} uniform in $\xi \in \R$, 
which is  
the essential difficulty of the shallow-water limit problem
(as compared to the deep-water problem
where the corresponding convergence is uniform in $\xi \in \R$).
We also note that 
from \eqref{res9}, \eqref{res10}, and Lemma \ref{LEM:multip}
with \eqref{res1}, we have
\begin{align}
\begin{split}
\wt \Xi_\dl (\bar \xi)
&
=
 \xi^3 ( 1 - h(\dl, \xi)) - \xi_1^3 (1 - h(\dl,\xi_1)) - \xi_2^3 (1 - h(\dl, \xi_2))
\\
&
= \Xi_\KDV(\bar \xi) - 
\Big( \xi^3 h(\dl,\xi) - \xi_1^3 h(\dl, \xi_1) - \xi_2^3 h(\dl, \xi_2) \Big).
\end{split}
\label{Xi1}
\end{align}

A direct computation  with \eqref{res10} and \eqref{res11}
yields
\begin{align}
\wt \Xi_\dl(\bar \xi)
= 6\xi \xi_1 \xi_2
 \sum_{k=1}^{\infty}   
 \frac{ \pi^2 k^2 \big(3 \pi^2 k^2 + \dl^2(\xi_1^2 + \xi_1 \xi_2 + \xi_2^2)\big)}
 {\prod_{j = 0}^2(\pi^2 k^2 +\dl^2\xi_j^2)}
\label{LL1}
\end{align}

\noi
under  $\xi = \xi_1 + \xi_2$, where $\xi_0 := \xi$.
We recall the following bounds on 
$\wt \Xi_\dl(\bar \xi)$ for $0 < \dl \le 1$;
see \cite[Subsection~5.4]{CGLLO}.

\begin{lemma}\label{LEM:res}
The following holds 
for any $\xi, \xi_1, \xi_2 \in \R$
with $\xi = \xi_1 + \xi_2$, 
uniformly in $0 < \dl \le 1$\textup{:}

\smallskip

\begin{itemize}
\item
[\textup{(i)}]
Suppose that  $\xi_{\max} \les \dl^{-1}$.
Then, we have 
$|\wt \Xi_\dl(\bar \xi)|
\sim |\Xi_\KDV(\bar \xi)|\sim |\xi\xi_1\xi_2|$.

\medskip

\item [\textup{(ii)}]
Suppose that $ \xi_{\max} \gg \dl^{-1}$.
Then, we have 
$|\wt \Xi_\dl(\bar \xi)|
\sim    \dl^{-1} \xi_{\min}\xi_{\max}$.

%\medskip
%
%\noi
%\item[\textup{(iii)}]
%Suppose that 
%$\xi_{\max} \gg    \dl^{-1} \ges  \xi_{\min}$.
%Then, we have 
%$|\wt \Xi_\dl(\bar \xi)|
% \sim    \dl^{-1} \xi_{\min}\xi_{\max}$.

\end{itemize}

\end{lemma}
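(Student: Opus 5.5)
Both bounds will be read off the series representation \eqref{LL1}, which writes
\[
\wt \Xi_\dl(\bar \xi) = 6\xi\xi_1\xi_2 \, S, \qquad
S := \sum_{k=1}^\infty a_k, \qquad
a_k := \frac{\pi^2 k^2\big(3\pi^2k^2 + \dl^2(\xi_1^2+\xi_1\xi_2+\xi_2^2)\big)}{\prod_{j=0}^2(\pi^2k^2+\dl^2\xi_j^2)}.
\]
Every term $a_k$ is positive, since $\xi_1^2+\xi_1\xi_2+\xi_2^2\ge 0$, so there is no cancellation. The task is therefore to show $S\sim 1$ in case (i) and $S\sim \dl^{-1}\xi_{\min}\xi_{\max}/|\xi\xi_1\xi_2| \sim (\dl\,\xi_{\med})^{-1}\sim(\dl\,\xi_{\max})^{-1}$ in case (ii), using $\xi_{\max}\sim\xi_{\med}$. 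Throughout I set $A:=\dl\,\xi_{\max}$ and note $\xi_1^2+\xi_1\xi_2+\xi_2^2\sim \xi_{\max}^2$, since this quadratic form is positive definite and $|\xi|\le|\xi_1|+|\xi_2|$.

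For each $k$, the numerator is comparable to $k^2(k^2+A^2)$. In the denominator, the two factors with $\xi_j\in\{\xi_{\max},\xi_{\med}\}$ are each $\sim k^2+A^2$, and the remaining factor $\pi^2k^2+\dl^2\xi_{\min}^2$ is at least $\pi^2k^2$. This gives the upper bound
\[
a_k \les \frac{k^2(k^2+A^2)}{k^2(k^2+A^2)^2} = \frac{1}{k^2+A^2}.
\]
For the lower bound I use $\dl\,\xi_{\min}\le A$, so the third factor is $\les k^2+A^2$, and hence $a_k\ges k^2/(k^2+A^2)^2$.

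Case (i): here $A\les1$. The upper bound gives $S\les\sum_k k^{-2}\les1$. For the lower bound, the $k=1$ term alone is $\ges 1/(1+A^2)^2\ges1$. This yields $|\wt\Xi_\dl|\sim|\xi\xi_1\xi_2|=\tfrac13|\Xi_\KDV|$ by \eqref{res1}.

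Case (ii): here $A\gg1$. For the upper bound, I split the sum at $k=A$. This gives
\[
S\les\sum_{k\le A}A^{-2}+\sum_{k>A}k^{-2}\les A^{-1}.
\]
For the lower bound I restrict to $k\in[A/2,A]$, where $a_k\ges A^2/A^4=A^{-2}$. There are about $A$ such $k$, so $S\ges A^{-1}$. Thus $|\wt\Xi_\dl|\sim |\xi\xi_1\xi_2|/(\dl\,\xi_{\max})=\dl^{-1}\xi_{\med}\xi_{\min}\sim\dl^{-1}\xi_{\min}\xi_{\max}$.

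The only step requiring care is making the lower bound uniform when $\xi_{\min}$ is large compared with $k$, that is, when $\dl\,\xi_{\min}$ is not small. The inequality $\dl\,\xi_{\min}\le A$ handles this. The boundary regime $A\sim1$ is covered by either argument once the implicit constants in ``$\les$'' and ``$\gg$'' are fixed. Degenerate cases where some $\xi_j=0$ are trivial, since both sides then vanish.
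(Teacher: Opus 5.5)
Your proof is correct, and for part (ii) it takes a different route from the paper's; part (i) is essentially the paper's argument (read off \eqref{LL1}, where each term is comparable to $k^{-2}$).

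\textbf{How the paper handles (ii).} It splits into two subcases.
\begin{itemize}
\item If $\xi_{\min}\gg\dl^{-1}$, it does not use the series at all. It combines the closed form \eqref{res6} with the uniform bound \eqref{res7} to get $\Xi_\dl=\Xi_\BO+O(\dl^{-2})$. Since $|\Xi_\BO|\sim\xi_{\min}\xi_{\max}\gg\dl^{-2}$, the error is negligible, and it then multiplies by $\frac3\dl$.
\item If $\xi_{\max}\gg\dl^{-1}\gtrsim\xi_{\min}$, it uses \eqref{LL1}. It cancels the numerator against the factor carrying the larger of $|\xi_1|,|\xi_2|$. It then replaces the factor carrying $\xi_{\min}$ by $\pi^2k^2$, which is exactly where $\dl\xi_{\min}\lesssim1$ is needed. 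Finally it evaluates $\sum_k(\pi^2k^2+\dl^2\xi_{\max}^2)^{-1}$ by comparison with $\int_0^\infty(\pi^2x^2+1)^{-1}dx$ via a Riemann sum.
\end{itemize}

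\textbf{How your route differs.} You treat all of (ii) at once. Your termwise bounds $k^2(k^2+A^2)^{-2}\lesssim a_k\lesssim(k^2+A^2)^{-1}$ disagree when $k\ll A$. This does not matter, because both sums are dominated by $k\sim A$. As a result you never need $\dl\xi_{\min}$ to be small, and you need neither the subcase split nor the comparison with BO. You also avoid having to check that the two subcases overlap in the transitional regime $\xi_{\min}\sim\dl^{-1}$.

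\textbf{What each buys.} Your version is a single, self-contained mechanism that uses only positivity of the terms in \eqref{LL1} and elementary summation. The paper's first subcase gives a more structural statement: for $\xi_{\min}\gg\dl^{-1}$, the scaled resonance is asymptotically $\frac3\dl\Xi_\BO$. This makes the BO-like character of high-frequency interactions explicit, which is the heuristic behind the paper's decomposition into low-frequency and residual dynamics.
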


\begin{proof}
(i) 
Suppose that  $\xi_{\max} \les \dl^{-1}$.
Then,  from \eqref{LL1} and \eqref{res1}, we have
\begin{align*}
|\wt \Xi_\dl(\bar \xi)|
\sim  |\xi \xi_1 \xi_2|
 \sum_{k=1}^{\infty}   
 \frac1{\pi^2 k^2}
\sim  |\xi \xi_1 \xi_2|
\sim |\Xi_\KDV(\bar \xi)|, 
\end{align*}

\noi
uniformly in $0 < \dl \le 1$.

\medskip

\noi
(ii)
We first consider the case  $ \xi_{\min} \gg \dl^{-1}$.
Then,  from \eqref{res3}, 
 we have
\begin{align}
|\Xi_\BO(\bar \xi)|\sim \xi_{\min}\xi_{\max} \gg \dl^{-2}.
\label{LL3}
\end{align}

\noi
Then, 
from 
  \eqref{res8} and  \eqref{LL3}, 
we obtain
\begin{align*}
|\Xi_\dl(\bar \xi) - \Xi_\BO(\bar \xi)| = O(\dl^{-2}) \ll 
 |\Xi_\BO(\bar \xi)|, 
\end{align*}

\noi
from which we conclude that 
\begin{align}
|\Xi_\dl(\bar \xi)|\sim | \Xi_\BO(\bar \xi)|
\label{LL4}
\end{align}

\noi
uniformly in $0 < \dl \le 1$.
Thus, from 
\eqref{LL4} with 
\eqref{res9},  \eqref{res10}, 
and  \eqref{res4},
we obtain
\begin{align*}
|\wt \Xi_\dl(\bar \xi)|\sim \dl^{-1} | \Xi_\BO(\bar \xi)|
\sim    \dl^{-1} \xi_{\min}\xi_{\max}, 
\end{align*}

\noi
uniformly in $0 < \dl \le 1$.

Next, we consider the case
  $\xi_{\max} \gg    \dl^{-1} \ges  \xi_{\min}$.
Without loss of generality, 
assume that $|\xi_1|\le |\xi_2|$.
Since $\xi_{\med} \sim \xi_{\max}$ under $\xi = \xi_1 + \xi_2$, 
it follows from 
 \eqref{LL1} with 
 $\xi_1^2 + \xi_1 \xi_2 + \xi_2^2 \ge
\frac 12  \xi_1^2 + \frac 12 \xi_2^2
\sim \xi_2^2$ that 
\begin{align*}
|\wt \Xi_\dl(\bar \xi)|
\sim |\xi \xi_1 \xi_2|
 \sum_{k=1}^{\infty}   
 \frac{ \pi^2 k^2 }
 {\prod_{j = 0}^1(\pi^2k^2  +\dl^2\xi_j^2)}.
\end{align*}

\noi
Note that the expression on the right-hand side is symmetric in $\xi$ and $\xi_1$.
Without loss of generality, assume that $|\xi| = |\xi_0| \le |\xi_1|$.
Then, we have $|\xi_1| \sim \xi_{\max}
\gg \dl^{-1} \ges \xi_{\min} = |\xi|$. 
In particular, we have 
$\pi^2k^2  +\dl^2\xi^2 \sim \pi^2 k^2$.
Then, by  estimating
the sum  by an integral via a Riemann sum approximation, we have 
\begin{align*}
|\wt \Xi_\dl(\bar \xi)|
& \sim \dl^{-1} |\xi  \xi_2|
 \sum_{k=1}^{\infty}   
 \frac{ 1}
 {\pi^2(\frac k{\dl \xi_1})^2  +1} \frac{1}{\dl |\xi_1|}\\
 & \sim 
    \dl^{-1} \xi_{\min}\xi_{\max}
\int_{0}^\infty
 \frac{ 1}
 {\pi^2x^2 +1}dx\\
& \sim    \dl^{-1} \xi_{\min}\xi_{\max}, 
\end{align*}

\noi
uniformly in $0 < \dl \le 1$.
\end{proof}

\section{Low frequency dynamics}
\label{SEC:low}

\subsection{Proof of Theorem \ref{THM:1}\,(i)}
\label{SUBSEC:low1}

By writing the low frequency dynamics \eqref{sILW2} in the Duhamel formulation, we have
\begin{align}
v^\low(t) =  \P_{ \dl^{-1}} S_\dl(t) \phi
+ \I_\dl\big(  \P_{ \dl^{-1}} (v^\low)^2 \big)(t), 
\label{sILW4}
\end{align}

\noi
where 
 $S_\dl(t) = e^{ t \Gd \dx^2}$
 as in \eqref{lin1}
and
$\I_\dl$ denotes the Duhamel integral operator 
with a derivative, 
given by 
\begin{align}
\I_\dl(F)(t) = \int_0^t S_\dl(t - t') \dx F(t') dt'.
\label{sILW4a}
\end{align}

Given $0 \le \dl < \infty$, 
define the $X^{s, b}_\dl$-space via the norm:
\begin{align}
\|v \|_{X^{s, b}_\dl} = 
\|\jb{\xi}^s \jb{\tau - \wt p_\dl(\xi)}^b \ft v(\tau, \xi)\|_{L^2_{\tau, \xi}}, 
\label{Xsb1}
\end{align}

\noi
where $\wt p_\dl(\xi)$ is as in \eqref{res10}, 
with the understanding that, when $\dl = 0$,  
$\wt p_0(\xi) = \xi^3$ 
in view of~\eqref{res13}.
Given an interval $J \subset\R$
we define the local-in-time version of the $X^{s, b}_\dl$-space
by setting
 \begin{align}
\| v\|_{X^{s,b}_{\dl}(J) }=\inf\big\{\| w\|_{X^{s,b}_{\dl}}: w|_{J}=v\big\}, 
\label{Xsb1b}
\end{align}
     
\noi
where the infimum is taken over all extensions $w$ of $v$
from $J$ to $\R$.
With a slight abuse of notation, 
when $J = [0, T]$, we set 
$X^{s,b}_{\dl}(T) = X^{s,b}_{\dl}([0, T])$.
For $b > \frac 12$, we have 
\begin{align}
\|v\|_{C_JH^s_x}
\les 
\|v\|_{X^{s, b}_{\dl}(J)}, 
% X^{s, b}_{\dl}(J)\subset C(J; H^s(\R)).
 \label{Xsb1a}
\end{align}

\noi
uniformly in 
intervals $J \subset \R$, 
where
$C_{J}H^s_x
= C(J;H^s(\R))$.

Our goal in this subsection is to prove the following uniform  (in small $\dl > 0$) bilinear estimate.

\begin{lemma}
[low $\times$ low $\mapsto$ low]
\label{LEM:bilin1}
Let  $s \ge 0$.
Then, for  $b > \frac 12$ and $b' \le \frac 58 $, we have 
\begin{align}
\big\| \P_{\dl^{-1}}\dx\big( (\P_{ \dl^{-1}}v_1) (\P_{\dl^{-1}}v_2)\big)\big\|_{X^{s, b'-1}_\dl} 
& \les \prod_{j = 1}^2 \| \P_{\dl^{-1}}v_j\|_{X^{s, b}_\dl}, 
\label{bilin1}\\
\big\| \P_{\dl^{-1}}\dx\big( (\P_{ \dl^{-1}}v)^2 \big)\big\|_{X^{s, b'-1}_\dl} 
& \les \| \P_{\dl^{-1}}v\|_{X^{0, b}_\dl}
\| \P_{\dl^{-1}}v\|_{X^{s, b}_\dl} , 
\label{bilin1x}
\end{align}

\noi
uniformly in $0 \le \dl \le1$.

\end{lemma}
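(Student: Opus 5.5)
We follow the scheme of Kenig, Ponce, and Vega \cite{KPV96} for the KdV bilinear estimate at $s=0$ (in fact $s>-\frac34$), using only the two properties of the low frequency dynamics that survive uniformly in $\dl$: the frequency localization \eqref{low1} and the KdV-type size of the resonance function from Lemma~\ref{LEM:res}\,(i). By duality and Plancherel, \eqref{bilin1} reduces to bounding
\[
\int_{\substack{\xi=\xi_1+\xi_2\\ \tau=\tau_1+\tau_2}}
\frac{|\xi|\jb{\xi}^s \, g(\tau,\xi)\, f_1(\tau_1,\xi_1) f_2(\tau_2,\xi_2)}
{\jb{\sigma}^{1-b'}\jb{\sigma_1}^{b}\jb{\sigma_2}^{b}\jb{\xi_1}^s\jb{\xi_2}^s}
\]
by $\|g\|_{L^2}\|f_1\|_{L^2}\|f_2\|_{L^2}$. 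Here $\sigma=\tau-\wt p_\dl(\xi)$, $\sigma_j=\tau_j-\wt p_\dl(\xi_j)$, all frequencies satisfy $|\xi|,|\xi_j|\le \dl^{-1}$, and $f_j,g\ge0$. Since $\jb{\xi}^s\les \jb{\xi_1}^s\jb{\xi_2}^s$ for $s\ge0$, it suffices to treat $s=0$. For \eqref{bilin1x}, we instead place $\jb{\xi}^s\les \jb{\xi_{\max}}^s$ onto whichever factor carries the larger frequency and then use symmetry. For $\dl=0$ the statement is the KPV estimate itself, so we assume $0<\dl\le1$.

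The key input is the algebraic identity $\sigma_1+\sigma_2-\sigma=\wt\Xi_\dl(\bar\xi)$. Because all frequencies are at most $\dl^{-1}$, Lemma~\ref{LEM:res}\,(i) gives
\[
\MAX(\jb{\sigma},\jb{\sigma_1},\jb{\sigma_2})\ges |\xi\xi_1\xi_2|,
\]
uniformly in $\dl$, exactly as for KdV. We then split into the three cases according to which modulation is largest, and within each case into the subcases where the frequencies are $\les1$ or large. In each case the argument is Cauchy--Schwarz in the two non-dominant variables. For example, when $\jb{\sigma}$ is maximal, it reduces to showing
\[
\sup_{\tau,\xi}\frac{|\xi|}{\jb{\sigma}^{1-b'}}\Big(\int \frac{d\xi_1}{\jb{\tau-\wt p_\dl(\xi_1)-\wt p_\dl(\xi-\xi_1)}^{2b}}\Big)^{1/2}<\infty,
\]
after first integrating in $\tau_1$ with the elementary bound $\int \jb{a-\tau_1}^{-2b}\jb{\tau_1-c}^{-2b}d\tau_1\les \jb{a-c}^{-2b}$. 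The remaining $\xi_1$-integral is handled by the change of variables $\mu=\wt p_\dl(\xi_1)+\wt p_\dl(\xi-\xi_1)$, whose Jacobian is
\[
\frac{d\mu}{d\xi_1}=\wt p_\dl'(\xi_1)-\wt p_\dl'(\xi-\xi_1).
\]
The other two cases are analogous, with the change of variables now in terms of $\wt p_\dl(\xi)-\wt p_\dl(\xi-\xi_2)$ or $\wt p_\dl(\xi_1+\xi_2)-\wt p_\dl(\xi_2)$, regarded as functions of $\xi$.

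The main obstacle is this change of variables. For KdV, the Jacobian is explicitly $3\xi(2\xi_1-\xi)$ and $\mu$ is an explicit quadratic in $\xi_1$, which yields the $|\xi|^{-1/2}\jb{\cdot}^{-1/4}$-type bound. For the scaled ILW we must establish two facts for $|\xi|,|\xi_1|\le\dl^{-1}$, uniformly in $\dl$:
\begin{itemize}
\item the Jacobian satisfies $|\wt p_\dl'(\xi_1)-\wt p_\dl'(\xi-\xi_1)|\sim|\xi|\,|2\xi_1-\xi|$;
\item $\mu$ is monotone on each side of $\xi_1=\xi/2$, with $|\mu-\mu(\xi/2)|\sim|\xi|(2\xi_1-\xi)^2$.
\end{itemize}
We plan to prove both using the convexity of $\wt p_\dl'$. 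From \eqref{res10}--\eqref{res11}, $\wt p_\dl'$ is even. By the mean value theorem, the Jacobian equals $\wt p_\dl''(\eta)(2\xi_1-\xi)$ for some $\eta$ between $\xi-\xi_1$ and $\xi_1$. We would then show, directly from the series in \eqref{res11}, that $\wt p_\dl''(\eta)\sim|\eta|$ for $|\eta|\les\dl^{-1}$, and that $\wt p_\dl''$ is odd and monotone there, so that the relevant average is comparable to $|\xi|$ as in the KdV case. Once these comparabilities are in place, the remaining steps follow the KPV bounds line by line: the low-frequency region $|\xi|\les1$ is trivial, and the case $\jb{\sigma_j}$ maximal uses $b'\le\frac58$ to absorb the factor $|\xi\xi_1\xi_2|^{-(1-b')}$ with room to spare.
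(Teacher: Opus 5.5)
Your reduction to $s=0$, your use of Lemma~\ref{LEM:res}\,(i), and your Jacobian bound are fine. Deriving $|\wt p_\dl'(\xi_1)-\wt p_\dl'(\xi_2)|\sim|\xi||\xi-2\xi_1|$ from $\wt p_\dl''(\eta)\sim\eta$ on $[0,\dl^{-1}]$ is cleaner than the explicit computation \eqref{BX1}--\eqref{BX6}; the termwise formula $\wt p_\dl''(\eta)=12\dl^{-2}\sum_k a_k\eta(3a_k-\eta^2)(a_k+\eta^2)^{-3}$ with $a_k=\pi^2k^2\dl^{-2}$ gives this at once. State it via evenness of $\wt p_\dl'$, writing the Jacobian as $\int_{|\xi_2|}^{|\xi_1|}\wt p_\dl''$. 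A single mean-value point does not suffice when $\xi_1,\xi_2$ have opposite signs.

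The gap is in how you organize the cases. You propose that whenever $\jb{\sigma_j}$ is maximal, you take the supremum over $(\tau_j,\xi_j)$ and integrate out the other two variables. This fails in the low$\times$high$\to$high regime where the \emph{low}-frequency factor carries the largest modulation. Take $j=2$, $|\xi_2|\ll1\ll|\xi|\sim|\xi_1|$, and $\jb{\sigma_2}\sim1$. After the $\tau$-integration you must bound $\int|\xi|^2\jb{\sigma_2-\wt\Xi_\dl(\bar\xi)}^{-2(1-b')}d\xi$. The resonance constraint $|\xi\xi_1\xi_2|\les\jb{\sigma_2}$ only forces $|\xi|\les|\xi_2|^{-1/2}$, so this integral is of order $|\xi_2|^{-3/2}$, cut off only by $|\xi|\le\dl^{-1}$. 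The supremum is therefore not bounded uniformly in $\dl$, and deferring to KPV ``line by line'' does not repair this unless you change the Cauchy--Schwarz in that region. Relatedly, your remark that $b'\le\frac58$ ``absorbs $|\xi\xi_1\xi_2|^{-(1-b')}$'' when $\jb{\sigma_j}$ is maximal does not correspond to an available factor. There the resonance only gives $\jb{\sigma_j}^{-b}\les|\xi\xi_1\xi_2|^{-b}$, and $\jb{\sigma}^{b'-1}$ gives nothing.

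The paper avoids both this problem and your second bullet by splitting on frequencies first.
\begin{itemize}
\item If $|\xi-2\xi_1|\ges|\xi|$ (which contains the configuration above), take the supremum over $(\tau,\xi)$ \emph{irrespective} of which modulation dominates. The Jacobian bound \eqref{BD4}, $|\dd_{\xi_1}\mu|\ges\xi^2$, cancels $|\xi|^2$ exactly, and $\jb{\sigma}^{b'-1}\le1$. No resonance and no quadratic analysis near $\xi_1=\xi/2$ is needed.
\item If $|\xi-2\xi_1|\ll|\xi|$, then $|\xi|\sim|\xi_1|\sim|\xi_2|$, so $\MAX\ges|\xi|^3$. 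For $|\xi|\gg1$ one bounds $|\xi|\jb{\sigma}^{b'-1}\les1$ (using $b'\le\frac23$) or $|\xi|\jb{\sigma_j}^{-b}\les1$, and closes with H\"older and the $L^4$-Strichartz estimate \eqref{Str1}. The case $|\xi|\les1$ is handled directly.
\end{itemize}
This is where $b'\le\frac58$ actually enters: in the $\sigma_j$-maximal case, \eqref{Str1} is applied to the dual function weighted by $\jb{\sigma}^{b'-1}$, which requires $1-b'\ge\frac38$.
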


%The first estimate \eqref{bilin1} in  

Lemma \ref{LEM:bilin1}
 together with the standard linear estimates (see, for example,  \cite{TAO}), 
 which hold uniformly in $0 \le  \dl \le 1$, 
yields
local well-posedness of 
\eqref{sILW2} 
in $X^{s,b}_{\dl}(\tau)$ with $b = \frac 12 + \g$
for sufficiently small $\g > 0$, 
via  a contraction argument applied to the Duhamel formulation~\eqref{sILW4}.
We note that a standard 
persistence-of-regularity argument
with the second estimate~\eqref{bilin1x},
which holds uniformly in $0 \le \dl \le 1$,  
allows us to  
choose 
the local existence time
 $\tau  = \tau(\|\phi\|_{L^2})>0$ 
to depend only on the $L^2$-norm of the initial data $\phi$, 
 independent of $0 \le  \dl \le 1$ and $s \ge 0$, 
while guaranteeing that 
\begin{align}\label{bilin1y}
\|v^\low\|_{X^{s,b}_\dl(\tau)}\lesssim \|\phi\|_{H^s},
\end{align}

\noi
uniformly in 
$0 \le  \dl \le 1$ (and $s \ge 0$). 
Furthermore, we point out that, in view of \eqref{Xsb1a}
and the quadratic (in particular, analytic) nonlinearity in \eqref{sILW4}, 
we conclude that 
the map, sending $\phi \in H^s(\R)$
to the solution $v^\low  \in C([0, \tau]; H^s(\R))$
to \eqref{sILW2}, 
is analytic at $\phi$ with respect to the $H^s$-regularity.
This proves Theorem \ref{THM:1}\,(i).

\begin{remark}\label{REM:GWP1}
\rm 

In view of \eqref{low1}, by writing \eqref{sILW2}
as 
\begin{align}
\dt v^\low   -     \Gd   \dx^2 v^\low= \P_{ \dl^{-1}} \dx\big(( \P_{ \dl^{-1}}v^\low)^2\big), 
\label{sILW2a}
\end{align}

\noi
it is easy to see that the $L^2$-norm of a solution to \eqref{sILW2a}
is conserved.
Hence, 
in view of the persistence of regularity
mentioned above
(namely, 
 the local existence time depends only on the $L^2$-norm of the data), 
we conclude  that  the low frequency dynamics \eqref{sILW2}
is globally well-posed in $H^s(\R)$ for $s \ge 0$
(without referring
 to the global well-posedness of the scaled ILW \eqref{sILW1}
in $L^2(\R)$ established in \cite{IS, CLOP}).
As for global well-posedness in $L^2(\R)$
of KdV (corresponding to $\dl = 0$), see \cite{BO93, KPV96}.

\end{remark}

Before proceeding to  a proof of 
Lemma \ref{LEM:bilin1}, 
we 
 recall the $L^4$-Strichartz estimate \cite{BO93}:
\begin{align}
\| v \|_{L^4_{t, x}} \les 
\|v\|_{X^{0, \frac 38}_\dl}, 
\label{Str1}
\end{align}

\noi
uniformly in $0 < \dl \le 1$, 
which follows from a straightforward modification
of the periodic version (\cite[Lemma 2.11]{Li24}).

\begin{proof}[Proof of Lemma \ref{LEM:bilin1}]
We only prove the first bound \eqref{bilin1},
since the second bound \eqref{bilin1x}
follows from 
the first bound \eqref{bilin1}
with $s = 0$
and the triangle inequality: $\jb{\xi}^s \les \jb{\xi_1}^s + \jb{\xi_2}^s$
for $\xi = \xi_1 + \xi_2$ when $s \ge 0$.

When $\dl = 0$ (corresponding to the KdV case), 
the bilinear estimate~\eqref{bilin1} 
was proven in~\cite{KPV96}.
Thus, we restrict our attention to $0 < \dl \le 1$ in the following.

We set 
\begin{align}
\s = \jb{\tau - \wt p_\dl(\xi)}
\qquad \text{and}\qquad 
\s_j = \jb{\tau_j - \wt p_\dl(\xi_j)}, \quad j = 1, 2, 
\label{mod1}
\end{align}

\noi
where $\wt p_\dl(\xi)$ is as in \eqref{res10}.
Then, 
it suffices to prove
\begin{align}
\bigg\|
\intt_{\substack{\tau = \tau_1 + \tau_2\\\xi = \xi_1 + \xi_2}}
\frac{\ind_{  |\xi|\le  \dl^{-1}} \cdot \xi}{\s^{1-b'}}
\frac{ f_1(\tau_1, \xi_1)f_2(\tau_2, \xi_2) d\tau_1 d\xi_1}
{\s_1^{b}
\s_2^{b}}
\bigg\|_{L^2_{\tau, \xi}}
\les \prod_{j = 1}^2 \|f_j\|_{L^2_{\tau, \xi}}, 
\label{mod2}
\end{align}

\noi
uniformly in $0 < \dl \le1$, 
for any functions $f_1, f_2 \in L^2(\R^2)$
such that 
$\supp f_j(\tau_j, \cdot\,) \subset \{|\xi_j|\le   \dl^{-1}\}$
for any $\tau_j \in \R$, $j = 1, 2$.
%where $\s$ and $\s_j$, $j = 1, 2$, 
%are as in \eqref{mod1}.

\medskip

\noi
$\bul$ {\bf Case 1:}
$|\xi - 2\xi_1|\ges |\xi|$.
\\
\indent
Proceeding with the 
 Cauchy-Schwarz argument as in \cite{KPV96}, 
 we see that~\eqref{mod2} follows once we prove 
 the following bound: 
 \begin{align}
\begin{split}
 \sup_{\tau, \xi \in \R} \frac{\ind_{  |\xi|\le   \dl^{-1}}\cdot |\xi|}{\s^{1-b'}}
 \bigg(\intt_{\substack{\tau = \tau_1 + \tau_2\\\xi= \xi_1 + \xi_2}}
\frac{
\ind_{|\xi - 2\xi_1|\ges |\xi|}\cdot 
\prod_{j = 1}^2 \ind_{ |\xi_j|\le   \dl^{-1}} d\tau_1 d\xi_1}
{\s_1^{2b}
\s_2^{2b}}
\bigg)^\frac 12 
< \infty, 
\end{split}
\label{BD1}
\end{align}

\noi
uniformly in $0 < \dl \le1$.
For simplicity of notation, 
we drop the frequency restrictions in the following
but it is understood that we work under the condition: 
\begin{align}
%1\ll |\xi| \le   \dl^{-1}, 
%\qquad 
|\xi|,  |\xi_1|, |\xi_2|\le   \dl^{-1}
 \qquad\text{and}
\qquad 
 |\xi - 2\xi_1|\ges |\xi|.
\label{BD1a}
\end{align}

%In view of the symmetry, we assume $|\xi_1|\le |\xi_2|$ in the following.
From \cite[Lemma 2.3]{KPV96}
with $\xi = \xi_1 + \xi_2$ and $2b > 1$, we have 
\begin{align}
\intt_{\tau = \tau_1 + \tau_2}
\frac{d\tau_1 }{\s_1^{2b}
\s_2^{2b}}
\les 
\frac{1}{\jb{\tau  - \wt p_\dl(\xi) + \wt \Xi_\dl(\bar \xi)}^{2b}}, 
\label{BD2}
\end{align}

\noi
where $\s_j$, $j = 1, 2$, is as in \eqref{mod1} and $\wt \Xi_\dl(\bar \xi)$ is as in \eqref{res9}.
For integration in $\xi_1$, 
we change variables:
\begin{align}
\mu = \mu_{\dl, \tau, \xi}(\xi_1) = \tau  - \wt p_\dl(\xi) + \wt \Xi_\dl(\xi, \xi_1, \xi - \xi_1).
\label{BD3}
\end{align}

\noi
From
\eqref{LL1}, we have
\begin{align}
\begin{split}
 \dd_{\xi_1} & \wt \Xi_\dl(\xi, \xi_1,  \xi_2) \\
& = 3\xi  \xi_2
 \sum_{k=1}^{\infty}   
 \frac{2 \pi^2 k^2 \big(3 \pi^2 k^2 + \dl^2(\xi_1^2 + \xi_1 \xi_2 + \xi_2^2)\big)}
 {\prod_{j = 0}^2(\pi^2 k^2 +\dl^2\xi_j^2)}\\
& \quad +  3\dl \xi \xi_1 \xi_2
 \sum_{k=1}^{\infty}   
 \frac{2 \pi^2 k^2 }
 {\prod_{j = 0}^2(\pi^2 k^2 +\dl^2\xi_j^2)}\\
& \hphantom{XXXXXX} \times
 \bigg\{ \dl (2\xi_1 + \xi_2) 
- \frac{  2\dl \xi_1  \big(3 \pi^2 k^2 + \dl^2(\xi_1^2 + \xi_1 \xi_2 + \xi_2^2)\big)}
 {\pi^2 k^2 +\dl^2\xi_1^2}\bigg\}. 
\end{split}
\label{BX1}
\end{align}

\noi
Then, from \eqref{BD3} and \eqref{BX1}
with $\xi_2 = \xi - \xi_1$
and the symmetry 
of $ \wt \Xi_\dl(\xi, \xi_1,  \xi_2)$ in $\xi_1$ and $\xi_2$, 
 we have 
\begin{align}
\begin{split}
 \frac{d \mu}{d \xi_1}
& = (\dd_{\xi_1} - \dd_{\xi_2})
\wt \Xi_\dl(\xi, \xi_1,  \xi_2) \\
& = 3\xi  (\xi -2 \xi_1)
 \sum_{k=1}^{\infty}   
 \frac{2 \pi^2 k^2 \big(3 \pi^2 k^2 + \dl^2(\xi_1^2 + \xi_1 \xi_2 + \xi_2^2)\big)}
 {\prod_{j = 0}^2(\pi^2 k^2 +\dl^2\xi_j^2)}\\
& \quad +  \underbrace{3\dl \xi \xi_1 \xi_2
 \sum_{k=1}^{\infty}   
 \frac{2 \pi^2 k^2 }
 {\prod_{j = 0}^2(\pi^2 k^2 +\dl^2\xi_j^2)}A_{\dl, k}(\bar \xi)}_{=:M_{\dl}(\bar \xi) },  \\
%& = : \1 + \II, 
\end{split}
\label{BX2}
\end{align}

\noi
where $A_{\dl, k}(\bar \xi) = A_{\dl, k}(\xi, \xi_1, \xi_2)$ is given by 
\begin{align*}
A_{\dl, k}(\bar \xi)
& = -\dl(\xi_2 - \xi_1) \\
& \quad + \big(3 \pi^2 k^2 + \dl^2(\xi_1^2 + \xi_1 \xi_2 + \xi_2^2)\big)
\bigg\{
\frac{  2\dl \xi_2  }
 {\pi^2 k^2 +\dl^2\xi_2^2}
- \frac{  2\dl \xi_1  }
 {\pi^2 k^2 +\dl^2\xi_1^2}\bigg\}.
\end{align*}

\noi
A direct computation yields
\begin{align}
A_{\dl, k}(\bar \xi)
& = \frac{\dl(\xi_2 - \xi_1) }
 {\prod_{j = 1}^2 (\pi^2 k^2 +\dl^2\xi_j^2)}
 B_{\dl, k}(\bar \xi) 
\label{BX4}
\end{align}

\noi
with  $B_{\dl, k}(\bar \xi) = B_{\dl, k}(\xi, \xi_1, \xi_2)$  given by 
\begin{align*}
 B_{\dl, k}(\bar \xi)
&  = 
5\pi^4 k^4 + \pi^2 k^2 \dl^2 (\xi_1^2 - 4\xi_1\xi_2 + \xi_2^2)
 - \dl^4 \xi_1\xi_2(2\xi_1^2 + 3\xi_1\xi_2+ 2\xi_2^2).
\end{align*}

\noi
Using the frequency restriction \eqref{BD1a}:
$ |\xi|, 
 |\xi_1|, |\xi_2|\le   \dl^{-1}$, 
 we then have
\begin{align}
\frac{ |B_{\dl, k}(\bar \xi)| }
 {\prod_{j = 1}^2 (\pi^2 k^2 +\dl^2\xi_j^2)}
\le 5 + \frac 6{\pi^2} + \frac 7{\pi^4} 
\approx 5.67 < 6, 
\label{BX5}
\end{align}

\noi
uniformly in $0 < \dl \le 1$ and $k \in \N$.
Thus, from \eqref{BX2}, \eqref{BX4}, and
\eqref{BX5}  with \eqref{BD1a}, 
we obtain
\begin{align}
M_{\dl}(\bar \xi) & = 3\xi  (\xi -2 \xi_1)
 \sum_{k=1}^{\infty}   
 \frac{2 \pi^2 k^2 }
 {\prod_{j = 0}^2(\pi^2 k^2 +\dl^2\xi_j^2)}
  D_{\dl, k}(\bar \xi)
\label{BX6}
\end{align}

\noi
for  some $D_{\dl, k}(\bar \xi) =  D_{\dl, k}(\xi, \xi_1, \xi_2)$
with $| D_{\dl, k}(\bar \xi)| <  6$, 
uniformly in 
 $0 < \dl \le 1$, $k \in \N$, 
and  $ |\xi|, 
 |\xi_1|, |\xi_2|\le   \dl^{-1}$.
Hence, by noting that 
\[
3\pi^2 k^2 \le 
3 \pi^2 k^2 + \dl^2(\xi_1^2 + \xi_1 \xi_2 + \xi_2^2)
\le 3\pi^2 k^2  + 3, 
\]

\noi
it follows from \eqref{BX2} and \eqref{BX6} that 
 the sign of $\dd_{\xi_1} \mu$
agrees with the sign of $\xi(\xi - 2\xi_1)$
(which in particular implies that,  for fixed $\xi$, we can carry out the change of variables~\eqref{BD3}
by splitting the domain of integration into two regions)
and that 
\begin{align}
& |\dd_{\xi_1} \mu |
\sim |\xi (\xi - 2\xi_1)| \ges \xi^2, 
\label{BD4}
\end{align}

\noi
uniformly in $0 < \dl \le 1$, 
 where the last bound follows from the last assumption in~\eqref{BD1a}.
Hence, from \eqref{BD2}, \eqref{BD3},  and \eqref{BD4}, we have
\begin{align*}
\text{LHS of \eqref{BD1}}
& \les \sup_{\tau, \xi \in \R} \frac{|\xi|}{\s^{1-b'}}
\bigg( \int \frac{d\xi_1}{\jb{\tau  - \wt p_\dl(\xi) + \wt \Xi_\dl(\xi, \xi_1, \xi- \xi_1)}^{2b}}\bigg)^\frac{1}{2}\\
& \les \sup_{\tau, \xi \in \R} \frac{1}{\s^{1-b'}}
\bigg( \int \frac{d\mu}{\jb{\mu}^{2b}}\bigg)^\frac{1}{2}\\
& < \infty, 
\end{align*}

\noi
provided that $b > \frac 12$ and $b' \le 1$.
This proves \eqref{BD1}.

\medskip

\noi
$\bul$ {\bf Case 2:}
$|\xi - 2\xi_1|\ll |\xi|$.
\\
\indent
In this case, we have 
\begin{align}
 |\xi|\sim |\xi_1| \sim |\xi_2| \le  \dl^{-1} .
\label{bilin1aa}
\end{align}
Then, from Lemma \ref{LEM:res}\,(i), 
we have 
\begin{align}
\MAX := \max(\s, \s_1, \s_2) \ges | \xi|^3.
\label{bilin1a}
\end{align}

We first consider the case $|\xi |\les 1$.
In this case, 
we do not need to use the lower bound~\eqref{bilin1a} 
on the modulations.
By H\"older's inequality and  the $L^4$-Strichartz estimate \eqref{Str1}, we have
\begin{align*}
\text{LHS of \eqref{bilin1}}
\les \prod_{j = 1}^2 \| \P_{ \dl^{-1}} v_j\|_{L^4_{t, x}}
\les \prod_{j = 1}^2 \| \P_{ \dl^{-1}} v_j\|_{X^{s, b}_\dl}, 
%\label{mod0}
\end{align*}

\noi
uniformly in $0 < \dl \le 1$, provided that
$b \ge \frac 38$
and 
 $b' \le 1$.

In the following, we assume $1 \ll |\xi|\le  \dl^{-1}$.
Suppose that 
$\MAX = \s$.
From \eqref{bilin1a} with~\eqref{bilin1aa}, we have 
\begin{align}
\frac{ |\xi|}{\s^{1-b'}}
\les |\xi|^{-2 + 3b'}
\les 1,
\label{bilin1c} 
\end{align}

\noi
provided that $b' \le \frac 23 $.
Then, \eqref{mod2} follows from \eqref{bilin1c}, 
H\"older's inequality (on the physical side), and the $L^4$-Strichartz estimate~\eqref{Str1}, 
provided that $b \ge \frac 38$.

Next, we consider the case
$\MAX = \s_j$, $j = 1$ or $2$.
From \eqref{bilin1a} with \eqref{bilin1aa}, we have 
\begin{align}
\frac{ |\xi|}{\s_j^{b}}
\les |\xi|^{1-3b}
\les 1,
\label{bilin1d} 
\end{align}

\noi
provided that $b \ge \frac 13  $.
Then, the bound \eqref{mod2}
(and hence \eqref{bilin1})
follows from 
duality, polarization,  
\eqref{bilin1d}, 
H\"older's inequality, and the $L^4$-Strichartz estimate~\eqref{Str1}, 
which requires $1 - b', b \ge \frac 38$.
\end{proof}

\subsection{Shallow-water convergence of the low frequency dynamics}
\label{SUBSEC:A}

In this subsection,  we establish 
shallow-water convergence in $H^s(\R)$, $s\ge 0$, 
of the low frequency dynamics~\eqref{sILW2}
to the KdV dynamics \eqref{KDV} (Theorem \ref{THM:1}\,(iii)).
We first recall that
both the low frequency dynamics~\eqref{sILW2}
and KdV \eqref{KDV}
are globally well-posed in $H^s(\R)$, $s \ge 0$; 
see
 Remark \ref{REM:GWP1}.

 Theorem~\ref{THM:1}\,(iii)
 follows once we prove the following proposition.

\begin{proposition}\label{PROP:conv1}
Let $s \ge 0$.
 Given $\phi \in H^s(\R)$, 
let $\{\phi_\dl\}_{0 < \dl \le 1} \subset H^s(\R)$
such that $\phi_\dl$ converges to $\phi$ in $H^s(\R)$ as $\dl \to 0$.
Let $v$ and $v_\dl^\low$
be the global-in-time solutions to KdV~\eqref{KDV} with $v|_{t = 0} = \phi$ and 
to the low frequency dynamics
\eqref{sILW2}
 with $v_\dl^\low|_{t = 0} = \P_{ \dl^{-1}} \phi_\dl$, 
 respectively.
Then, $v_\dl^\low$
converges to $v$ in $C(\R; H^s(\R))$
as $\dl \to 0$, 
where
$C(\R; H^s(\R))$ is endowed with the compact-open topology in time.

\end{proposition}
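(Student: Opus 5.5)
Since both flows are globally well-posed and $L^2$ is conserved for each (Remark~\ref{REM:GWP1}), with the local existence time $\tau=\tau(\|\phi\|_{L^2})$ uniform in $0\le\dl\le1$, it suffices to prove convergence in $C([0,\tau];H^s(\R))$ on a single local interval and then iterate. Indeed, $\|v^\low_\dl(t)\|_{L^2}=\|\P_{\dl^{-1}}\phi_\dl\|_{L^2}\le \|\phi_\dl\|_{L^2}$ is bounded uniformly in $\dl$, so the same $\tau$ works on $[k\tau,(k+1)\tau]$ for every $k$. At each step the data converge in $H^s$ by the previous step, so the hypothesis of the proposition is reproduced; negative times are handled symmetrically. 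On $[0,\tau]$ we write the difference of the Duhamel formulations \eqref{sILW4} for $v^\low_\dl$ and for $v$ (the case $\dl=0$), and we work in the $\dl$-dependent spaces only through uniform bounds. Both $v^\low_\dl$ and $v$ are bounded in their respective $X^{s,b}$ spaces by \eqref{bilin1y}, uniformly in $\dl$.

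The plan is a frequency truncation at a fixed level $N\gg1$ with $N\le \dl^{-1}$, combined with uniform equicontinuity. \emph{Step 1 (uniform high-frequency smallness):} show that $\sup_{0\le\dl\le1}\|\P_N^\perp v_\dl^\low\|_{C([0,\tau];H^s)}\to0$ as $N\to\infty$, and similarly for $v$; this is the equicontinuity Lemma~\ref{LEM:equi} mentioned in the introduction. To prove it, we apply $\P_N^\perp$ to the Duhamel formula. The linear part gives $\|\P_N^\perp\phi_\dl\|_{H^s}$, which is uniformly small because $\phi_\dl\to\phi$ in $H^s$ makes $\{\phi_\dl\}$ precompact. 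For the nonlinear part we use \eqref{bilin1} with $b'>b$ to gain a small power of $\tau$. We split $v=\P_{N/4}v+\P_{N/4}^\perp v$; the term $\P_N^\perp\big((\P_{N/4}v)^2\big)$ vanishes, so every nonlinear contribution contains a high-frequency factor. This yields a closed inequality
\[
\|\P_{N/4}^\perp v\|_{X^{s,b}_\dl(\tau')}\lesssim \|\P^\perp_{N/4}\phi_\dl\|_{H^s}+\tau'^{\theta}\|v\|_{X^{s,b}_\dl}\|\P_{N/4}^\perp v\|_{X^{s,b}_\dl},
\]
which can be absorbed for $\tau'$ small depending only on $\sup_\dl\|\phi_\dl\|_{L^2}$. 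Strictly speaking, the projectors must be smooth or handled with a dyadic bookkeeping, since $\P_N^\perp$ does not commute nicely with the cross terms; I would treat this with $\P_{N}^\perp(fg)$ bounded by contributions where $\max(|\xi_1|,|\xi_2|)\ge N/2$.

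\emph{Step 2 (low-frequency convergence for fixed $N$):} compare $w_\dl=\P_N v_\dl^\low$ with $w=\P_N v$ on $[0,\tau]$. For $\dl\le N^{-1}$, the truncation $\P_{\dl^{-1}}$ acts as the identity on frequencies $\le N$, so $w_\dl$ solves $\P_N$ applied to the scaled ILW with the nonlinearity $\P_N\dx\big((v_\dl^\low)^2\big)$. The difference is then $w_\dl-w=\P_N(S_\dl(t)\phi_\dl-S_0(t)\phi)+\P_N\int_0^t\big(S_\dl-S_0\big)(t-t')\dx (v^\low_\dl)^2dt'+\P_N\int_0^tS_0(t-t')\dx\big((v^\low_\dl)^2-v^2\big)dt'$. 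On $|\xi|\le N$ we have $|\wt p_\dl(\xi)-\xi^3|\le |\xi|^3h(\dl,\xi)\le \dl^2N^5$ by \eqref{L3}, so $\|\P_N(S_\dl(t)-S_0(t))\|_{H^s\to H^s}\lesssim \dl^2N^5|t|$. Together with the trivial bound $\|\P_N\dx F\|_{H^s}\le N\|F\|_{H^s}$ and the uniform $L^\infty_tL^2$ bounds, the first two terms are $o(1)$ as $\dl\to0$ for fixed $N$. The products $(v_\dl^\low)^2$ are in $L^1_x$, so we estimate in $H^{-1}$, then upgrade via $\P_N$; alternatively we run the argument in $L^2$ and use $\P_N$ to interpolate to $H^s$. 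For the third term we decompose $v^\low_\dl=w_\dl+\P_N^\perp v^\low_\dl$ and likewise for $v$. The high parts are small by Step~1, and the remaining term $(w_\dl-w)(w_\dl+w)$ is handled by the uniform bilinear estimate. For this it is cleanest to work entirely in $X^{s,b}_0$ restricted to $|\xi|\le N$, where the $X^{s,b}_\dl$ and $X^{s,b}_0$ norms are comparable up to a factor $1+\dl^2N^5$. A Gronwall/contraction argument on a short interval, iterated, then gives
\[
\limsup_{\dl\to0}\|v^\low_\dl-v\|_{C([0,\tau];H^s)}\lesssim \sup_\dl\|\P_N^\perp v^\low_\dl\|+\|\P_N^\perp v\|,
\]
and letting $N\to\infty$ concludes the proof.

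I expect Step~1 to be the main obstacle: obtaining smallness of high frequencies uniformly in $\dl$ without integrability. This rests on \eqref{bilin1} being uniform in $\dl$ and on the time-gain from $b'>b$. The mismatch between the $X^{s,b}_\dl$ and $X^{s,b}_0$ norms in Step~2 is only a bookkeeping issue, because everything relevant there is localized to $|\xi|\le N$.
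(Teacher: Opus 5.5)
Your route is viable and genuinely different from the paper's, but two steps need repair as written.

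\emph{Step~1.} The displayed ``closed inequality'' is not what the Duhamel formula gives. $\P_{N/4}^\perp$ does not annihilate $(\P_{N/4}v)^2$, so you cannot have $\P_{N/4}^\perp$ on both sides. Applying $\P_N^\perp$, splitting at $N/4$, and taking $\tau'$ small in terms of the uniform bound \eqref{bilin1y}, what you actually obtain is $a(N)\le C\sup_\dl\|\P_N^\perp\phi_\dl\|_{H^s}+\frac12 a(N/4)$, where $a(N)=\sup_\dl\|\P_N^\perp v^\low_\dl\|_{X^{s,b}_\dl(\tau')}$. This cannot be absorbed. However, $a$ is bounded, so taking $\limsup_{N\to\infty}$ gives $\limsup a\le\frac12\limsup a$, hence $a(N)\to0$. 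You then iterate in time, using $X^{s,b}_\dl(\tau')\subset C([0,\tau'];H^s)$ to get uniform tail smallness of the new data.

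\emph{Step~2.} Since the contraction is run in $X^{s,b}_0(\tau')$, the two linear-mismatch terms must be small in $X^{s,b}_0(\tau')$, not merely in $C_tH^s$. This follows from the same symbol bound. For example, $G=\P_N\int_0^t(S_\dl-S_0)(t-t')\dx(v^\low_\dl)^2dt'$ vanishes at $t=0$, and $(\dt+\dx^3)G=\P_N(\Gd\dx^2+\dx^3)\int_0^tS_\dl(t-t')\dx(v^\low_\dl)^2dt'$, whose multiplier is $O(\dl^2N^5)$ on $|\xi|\le N$.

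Also, it is only the \emph{outputs} that are localized to $|\xi|\le N$. The high part $\P_N^\perp v^\low_\dl$ lives up to $|\xi|\sim\dl^{-1}$, where $|\wt p_\dl(\xi)-\xi^3|\sim\dl^{-3}$, so its $X^{s,b}_0$-norm is not uniformly controlled. You should group the terms as follows:
\begin{itemize}
\item $\P_N\dx\big((v^\low_\dl)^2-(\P_Nv^\low_\dl)^2\big)$, estimated by \eqref{bilin1} in $X^{s,b'-1}_\dl$, converting only the output (supported in $|\xi|\le N$) to $X^{s,b'-1}_0$ at cost $(1+\dl^2N^5)^{1-b'}$;
\item $\P_N\dx\big(v^2-(\P_Nv)^2\big)$, estimated by the KdV bound.
\end{itemize}
With this grouping, the contraction constant and the number of time steps are independent of $N$, which is what your final $\limsup$ bound requires.

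\emph{Comparison with the paper.} For equicontinuity (Lemma~\ref{LEM:equi}), the paper does not estimate frequency tails directly. It uses translation invariance and a Lipschitz difference bound for $v^\low_\dl(\cdot+y)-v^\low_\dl$, glued over $[0,T]$. It then converts translation-equicontinuity into $X^{s,b}$ tail smallness via $1-\ind_{[-N,N]}(\xi)\les\int|e^{i\xi y}-1|^2Ne^{-2N|y|}dy$. Your tail iteration is more elementary and reaches the same conclusion.

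For the convergence, the paper passes to interaction representations, so that $\vv$ and $\vv_\dl$ sit in the same space $H^{0,b}$, and applies \eqref{bilin1} to $\NN_\dl(\vv)-\NN_\dl(\vv_\dl)$. It must then control the error $\EE_{N,\dl}(\vv)$ coming from the phase mismatch $e^{-it'\Xi_{\KDV}(\bar\xi)}-e^{-it'\wt\Xi_\dl(\bar\xi)}$, which depends on the \emph{input} frequencies. This forces the splitting at $\dl^{-\frac25+\ta}$, the use of tail smallness of the KdV solution, and an interpolation between a uniform $H^{0,\frac12+2\g}$ bound and $L^2_{t,x}$ smallness.

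In your physical-space formulation, the dispersion mismatch acts only on output frequencies $|\xi|\le N$, where $|\wt p_\dl(\xi)-\xi^3|\le\dl^2N^5$ by \eqref{L3}. So no such error analysis is needed. The price is the two-space bookkeeping described above, which the interaction representation avoids.
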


In the following, 
 we only consider positive times.

\medskip

In proving Proposition \ref{PROP:conv1}, 
uniform (in $0 < \dl \le 1$) equicontinuity 
of solutions to the low frequency dynamics \eqref{sILW2} plays a key role.
For this purpose, 
 let us recall the notion of equicontinuity.

\begin{definition}\label{DEF:EC}
\rm 
Let $s \in \R$.
We say that a bounded set $E \subset H^s(\R)$ is equicontinuous
if 
\begin{align}
 \lim_{\eps \to 0+} \sup_{f \in E} \sup_{|y|< \eps} 
\| f (\,\cdot + y) - f(\,\cdot\,)\|_{H^s} = 0.
\label{EEC1}
\end{align}

\noi
By Plancherel's identity, \eqref{EEC1} is equivalent to 
\begin{align*}
 \lim_{N\to \infty} \sup_{f \in E}
 \| \P_{N}^\perp f\|_{H^s} = 0.
\end{align*}

\end{definition}

The following lemma establishes
uniform (in $0 < \dl \le 1$) equicontinuity 
in $H^s(\R)$, $s \ge 0$, 
of solutions to the low frequency dynamics \eqref{sILW2}
as well
as
local-in-time uniform (in $0 < \dl \le 1$) ``equicontinuity''
with respect to the $X^{s, b}$-norm.

\begin{lemma}\label{LEM:equi}
Given $s \ge 0$, 
let $\{\phi_\dl\}_{0 \leq  \dl \le 1} \subset H^s(\R)$
be bounded and equicontinuous.
Let  $v_\dl^\low$
be the global-in-time solution
to  the low frequency dynamics
\eqref{sILW2}
 with initial data $v_\dl^\low|_{t = 0} = \P_{ \dl^{-1}} \phi_\dl$.\footnote{Recall our convention
 that, when $\dl = 0$,  
 $v_0^\low$ corresponds to the solution to KdV \eqref{KDV}
 with initial data $v_0^\low|_{t = 0} = \phi_0$.}
 Then, 
given any $T > 0$,
we have 
\begin{align}
\lim_{N \to \infty}
 \sup_{0 \leq  \dl \le 1}
 \| \P_{N}^\perp v_\dl^\low\|_{X_\dl^{s, \frac 12 + \g}(T)} = 0
\label{EC0}
\end{align}
 
\noi
for sufficiently small $\g > 0$,
and consequently,
\begin{align}
 \lim_{N\to \infty} \sup_{0 \le  \dl \le 1}\sup_{0 \le t \le T}
 \| \P_{N}^\perp v_\dl^\low(t)\|_{H^s} = 0.
\label{EC1}
\end{align}

\end{lemma}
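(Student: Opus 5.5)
We prove \eqref{EC0} by a semilinear argument on short time intervals. \eqref{EC1} then follows at once from \eqref{Xsb1a}, since $\P_N^\perp$ commutes with restriction in time and $\|\P_N^\perp w\|_{C_TH^s_x}\les \|\P_N^\perp w\|_{X^{s,b}_\dl(T)}$.

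\emph{Step 1: a priori bounds.} The family $\{\phi_\dl\}$ is bounded in $H^s$, and the $L^2$-norm of $v_\dl^\low$ is conserved (Remark~\ref{REM:GWP1}). Hence the local existence time $\tau=\tau(\sup_\dl\|\phi_\dl\|_{L^2})$ from Subsection~\ref{SUBSEC:low1} can be chosen uniformly in $0\le\dl\le1$. Cover $[0,T]$ by $K\sim T/\tau$ intervals $J_k=[k\tau,(k+1)\tau]$. Iterating \eqref{bilin1y} (with the $H^s$ growth controlled through \eqref{bilin1x}, which is linear in the $H^s$-norm) gives
\[
\sup_{0\le\dl\le1}\sup_k\|v_\dl^\low\|_{X^{s,b}_\dl(J_k)}\le C(T)<\infty,\qquad b=\tfrac12+\g .
\]

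\emph{Step 2: the high-frequency part on one interval.} Apply $\P_N^\perp$ to the Duhamel formula \eqref{sILW4} on $J_k$. The linear part is $\P_N^\perp S_\dl(t-k\tau)v_\dl^\low(k\tau)$, whose $X^{s,b}_\dl(J_k)$-norm is bounded by $\|\P_N^\perp v_\dl^\low(k\tau)\|_{H^s}$. For the nonlinear part, write $v=\P_{N/2}v+\P_{N/2}^\perp v$. The term $\P_N^\perp\dx((\P_{N/2}v)^2)$ vanishes identically, since the Fourier support of $(\P_{N/2}v)^2$ lies in $\{|\xi|\le N\}$. The remaining terms contain at least one factor $\P_{N/2}^\perp v$. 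By \eqref{bilin1} together with the time-localization gain $\tau^{b'-b}$ from the standard linear estimates (taking $b'=\frac58$, say), we get
\[
\|\P_N^\perp v_\dl^\low\|_{X^{s,b}_\dl(J_k)}\les \|\P_N^\perp v_\dl^\low(k\tau)\|_{H^s}+\tau^{\theta}C(T)\,\|\P_{N/2}^\perp v_\dl^\low\|_{X^{s,b}_\dl(J_k)} .
\]

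\emph{Main obstacle.} This inequality does not close, because the right-hand side carries $N/2$ rather than $N$. One could try to absorb the factor $\tau^\theta C(T)$, but that requires shrinking $\tau$ depending on the $H^s$-norm, and the same mismatch $N$ vs.\ $N/2$ persists. A cleaner fix is a frequency-envelope or weighted argument. Replace $\P_N^\perp$ by a smooth multiplier $\psi_N(\xi)=\min(1,(|\xi|/N))^{\epsilon}$-type weight, or more simply run the estimate with the weighted norm $\|\jb{\xi}^{s}w_N(\xi)\cdot\|$, where $w_N$ is slowly increasing and satisfies $w_N(\xi)\le w_N(\xi_1)+w_N(\xi_2)$. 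Since $w_N$ is subadditive, the proof of \eqref{bilin1x} goes through with $\jb{\xi}^s$ replaced by $\jb{\xi}^s w_N$, giving
\[
\|w_N v\|_{X^{s,b}(J_k)}\les\|w_Nv(k\tau)\|_{H^s}+\tau^\theta\|v\|_{X^{0,b}(J_k)}\|w_Nv\|_{X^{s,b}(J_k)} .
\]
Here only the conserved $L^2$ quantity appears in the coefficient, so the inequality closes with $\tau$ depending only on the $L^2$-norm. Choose $w_N$ equal to $0$ for $|\xi|\le N$ and growing slowly (e.g.\ logarithmically, capped at $1$) up to $1$ for $|\xi|\ge N^2$. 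Equicontinuity of $\{\phi_\dl\}$ makes $\|w_N\phi_\dl\|_{H^s}\to0$ uniformly in $\dl$. One caveat: such $w_N$ is not exactly subadditive near the cutoff, so the precise choice here is where care is needed.

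\emph{Step 3: iteration.} Iterate the weighted bound over the $K$ intervals. This gives $\|w_Nv_\dl^\low\|_{X^{s,b}_\dl(T)}\le C^K\|w_N\phi_\dl\|_{H^s}\to0$ uniformly in $\dl$. Since $w_N=1$ on $|\xi|\ge N^2$, this yields \eqref{EC0} with $N^2$ in place of $N$, which suffices. The $\dl=0$ case is identical, using the KdV estimate of \cite{KPV96}.
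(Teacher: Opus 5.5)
Your strategy is viable and differs from the paper's: you propagate a frequency-tail weight through the Duhamel formula, using that \eqref{bilin1x} carries only the conserved $L^2$-norm in one slot. But the step everything rests on fails for the weight you actually commit to. The weighted version of \eqref{bilin1x} needs $w_N$ nondecreasing in $|\xi|$ with $w_N(2r)\le C\,w_N(r)$ and $C$ independent of $N$; monotonicity plus subadditivity gives exactly this. Then $\jb{\xi}^s w_N(\xi)\les \jb{\xi_j}^s w_N(\xi_j)$ for the larger input frequency $\xi_j$. No nonnegative weight vanishing on $\{|\xi|\le N\}$ can have this property. Subadditivity gives $w_N(\xi)\le 2w_N(\xi/2)=0$ for $|\xi|\le 2N$, and iterating forces $w_N\equiv 0$. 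So the caveat near the cutoff is not a technicality. It is precisely the $N$ versus $N/2$ loss of your Step~2 in another guise, and as written the weighted inequality is unproved for your logarithmic $w_N$.

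The repair is to return to the weight you mentioned first, $w_N(\xi)=\min(1,|\xi|/N)^{\epsilon}$.
\begin{itemize}
\item It is concave and nondecreasing in $|\xi|$ with $w_N(0)=0$, hence subadditive.
\item It equals $1$ on $\{|\xi|\ge N\}$, so it controls $\P_N^\perp$ directly, with no passage to $N^2$.
\item It satisfies $\sup_\dl\|w_N\phi_\dl\|_{H^s}\le (M/N)^{\epsilon}\sup_\dl\|\phi_\dl\|_{H^s}+\sup_\dl\|\P_M^\perp\phi_\dl\|_{H^s}$, which tends to $0$ by choosing $M$, then $N$.
\end{itemize}
The weighted bilinear bound then follows from \eqref{mod2}, since that is an estimate for nonnegative $L^2$ functions, and your Steps~1 and~3 go through.

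Your Step~2 could also have been closed as it stood. Shrinking $\tau$ in terms of the $\dl$-uniform bound $C(T)$ is harmless. Set $a_N=\|\P_N^\perp v_\dl^\low\|_{X^{s,b}_\dl(J_k)}$ and $b_N=\|\P_N^\perp v_\dl^\low(k\tau)\|_{H^s}$. Then $a_N\le Cb_N+\frac12 a_{N/2}$ iterates to $a_N\le C\sum_{j<m}2^{-j}b_{N/2^j}+2^{-m}C(T)$. So the dyadic mismatch is absorbed by geometric decay, inductively in $k$ via \eqref{Xsb1a}.

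For comparison, the paper avoids frequency bookkeeping altogether.
\begin{itemize}
\item By translation invariance, $v_\dl^\low(\cdot+y)$ is the solution with data $\phi_\dl(\cdot+y)$.
\item The Lipschitz bound from the contraction argument, glued over $[0,T]$, gives $\|v_\dl^\low(\cdot+y)-v_\dl^\low\|_{X^{s,b}_\dl(T)}\le C\|\phi_\dl(\cdot+y)-\phi_\dl\|_{H^s}$.
\item The averaging inequality $1-\ind_{[-N,N]}(\xi)\les\int|e^{i\xi y}-1|^2Ne^{-2N|y|}\,dy$ converts this translation-equicontinuity into \eqref{EC0}.
\end{itemize}
That route uses only the unweighted estimates of Lemma~\ref{LEM:bilin1}. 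Your repaired route needs a routine weighted estimate, but it tracks the high-frequency tail directly, with time steps depending only on the $L^2$-norm.
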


We note that 
Lemma \ref{LEM:equi} follows from 
the semilinear local well-posedness of \eqref{sILW2}
and
the translation characterization \eqref{EEC1} of equicontinuity 
for the 
$X^{s,b}(T)$-norm; see \eqref{Ben3}.
Compare this with the  weakly uniform equicontinuity
in $L^2(\R)$
of solutions to the scaled ILW \eqref{sILW1}
proven in \cite{CLOZ}
which relied heavily on the complete integrability of
the equation.

\begin{proof}[Proof of Lemma \ref{LEM:equi}]
In the following, various constants
depend on $s\ge 0$ and small $\g > 0$
but we suppose their dependence on $s$ and $\g$
for simplicity of the presentation.

Fix $T > 0$.
By the persistence of regularity
(namely, iteratively applying \eqref{bilin1x} in Lemma~\ref{LEM:bilin1}
with 
 the $L^2$-conservation
 and 
\eqref{Xsb1a})
and
 the boundedness of   $\{\phi_\dl\}_{0 \leq  \dl \le 1} \subset H^s(\R)$, 
  we have 
\begin{align}
\begin{split}
\sup_{0\leq \dl\leq 1}\sup_{y\in \R}\|v^\low _\dl(\,\cdot + y) \|_{C_T H^s_x}
& \le C(T) \sup_{0\leq \dl\leq 1}\sup_{y\in \R}\|\phi_\dl(\,\cdot + y) \|_{H^s}\\
& \leq C(T) \sup_{0\leq \dl\leq 1}\|\phi_\dl\|_{H^s} < \infty, 
\end{split}
\label{Bex1}
\end{align}

\noi
where
$C_{T}H^s_x
= C([0, T];H^s(\R))$
and
the translation by $y$ on the left-hand side is in the spatial variable.
Then, 
it follows from a slight modification of the local well-posedness argument
with~\eqref{Bex1} (see also \eqref{bilin1y})
that 
there exists  \(\tau = \tau\big(T,\sup_{0\leq \dl\leq 1}\|\phi_\dl\|_{H^s}\big)>0\) 
such that 
\begin{align}
\|v_\dl^\low(\, \cdot + y) - v_\dl^\low \|_{X^{s,\frac12+\gamma}_\dl([t_0,t_0+\tau])}\lesssim \|v_\dl^\low(t_0,\, \cdot + y) - v_\dl^\low(t_0)\|_{H^s}
\label{Bex2}
\end{align}

\noi
for any $t_0 \in [0, T]$ and $y \in \R$.
Together with \eqref{Xsb1a}, 
this  
 in turn implies that 
 there exists a constant \(K = K\big(T,\sup_{0\leq \dl\leq 1}\|\phi_\dl\|_{H^s}\big)>0\) such that
\begin{align}
\|v_\dl^\low(\, \cdot + y) - v_\dl^\low \|_{C_TH^s_x}\leq K\|\phi_\dl(\,\cdot+y) - \phi_\dl\|_{H^s}, 
\label{Bex3}
\end{align}

\noi
uniformly in $0 \le \dl  \le 1$ and 
 $y \in \R$.

Let
$I_j  = \big[\frac j 2 \tau, (\frac j 2 + 1)\tau\big]$, 
$j = 0, 1, \dots, 2 \big\lceil\frac T\tau \big\rceil$, 
where $\lceil x\rceil$ denotes the smallest integer
greater than or equal to $x$.
Then, we have $|I_j| = \tau$,  $|I_j\cap I_{j+1}| = \frac \tau 2$, 
and 
\[ [0, T] \subset \bigcup_{j = 0}^{\lceil T/\tau \rceil}I_j.\]

\noi
Hence, 
by  a gluing argument (see, for example,  \cite[Lemma 4.5]{bring})
with \eqref{Bex2} and \eqref{Bex3}, 
we obtain, for some 
 \(C = C\bigl(T,\sup_{0\leq \dl\leq 1}\|\phi_\dl\|_{H^s}\bigr)>0\), 
\begin{align*}
\|v_\dl^\low(\, \cdot + y) - v_\dl^\low \|_{X^{s,\frac12+\gamma}_\dl(T)}\leq C\|\phi_\dl(\,\cdot+y) - \phi_\dl\|_{H^s}, 
\end{align*}

\noi
uniformly in $0 \le \dl  \le 1$ and 
 $y \in \R$.
Consequently, 
from the equicontinuity of $\{\phi_\dl\}_{0 \le \dl \le 1}$, we obtain
\begin{align}
\lim_{\eps\to0+}\sup_{0\leq \dl\leq 1}\sup_{|y|<\eps}
\|v_\dl^\low(\cdot + y) - v_\dl^\low\|_{X^{s,\frac12+\gamma}_\dl(T)} = 0.
\label{Ben2}
\end{align}

Now, we note that 
\begin{align}
 \| \P_{N}^\perp v_\dl^\low\|_{X_\dl^{s, \frac 12 + \g}(T)}^2
 \lesssim \int_\R \|v_\dl^\low(\,\cdot+y) - v_\dl^\low\|_{X_\dl^{s,\frac12+\g}(T)}^2 Ne^{-2N|y|}\,dy, 
\label{Ben3}
\end{align}

\noi
which follows from 
\[ 1 - \ind_{[-N, N]}(\xi) \les \
\int_\R  |e^{i \xi y} - 1|^2Ne^{-2N|y|}dy.\]

\noi
See
\cite[the proof of Lemma 4.2]{KV19}.
See also 
\cite[Theorem 4]{Pego}
for a related argument.
We also note that by iterating the local-in-time argument  for \eqref{sILW2}
using \eqref{bilin1x} in Lemma \ref{LEM:bilin1}
together with the $L^2$-conservation (see Remark \ref{REM:GWP1}), 
we have 
\begin{align}
 \sup_{0 \leq  \dl \le 1}
 \|v_\dl^\low\|_{X_\dl^{s,\frac12+\g}(T)}
& \le C\Big(T, 
 \sup_{0\leq \dl\leq 1}\|\phi_\dl\|_{H^s}\Big).
 \label{Ben3a}
\end{align}

Fix  $\eta > 0$.
Then, it follows from \eqref{Ben2} that 
there exists
 $\eps = \eps(\eta) > 0$ such that 
\begin{align}
\sup_{0\leq \dl\leq 1}\sup_{|y|<\eps}
\|v_\dl^\low(\cdot + y) - v_\dl^\low\|_{X^{s,\frac12+\gamma}_\dl(T)}^2 < \frac 12 \eta.
\label{Ben4}
\end{align}

\noi
Hence, by noting that $\int_{|y|< \eps} N e^{-2N|y|} \le 1$, 
it follows from \eqref{Ben3} with \eqref{Ben4} and \eqref{Ben3a} that 
there exists
 $N_* = N_*\big(\eta, \eps,  T, \sup_{0\leq \dl\leq 1}\|\phi_\dl\|_{H^s}\big) > 0$ 
 such that 
\begin{align*}
 \sup_{0 \leq  \dl \le 1}
 \| \P_{N}^\perp v_\dl^\low\|_{X_\dl^{s, \frac 12 + \g}(T)} ^2
& < 
\frac 12 \eta
+ 
4
 \sup_{0 \leq  \dl \le 1}
 \|v_\dl^\low\|_{X_\dl^{s,\frac12+\g}(T)}^2
\int_{|y|\ge \eps} Ne^{-2N|y|}\,dy\\
& < \eta
\end{align*}

\noi
for any $N \ge N_*$.
This proves 
 \eqref{EC0}.
The second limit  \eqref{EC1}  follows from the fact that 
$X^{s,\frac12+\gamma}_\dl(T)\subset C([0,T];H^s(\R))$.
\end{proof}

We now present a proof of Proposition \ref{PROP:conv1}.

\begin{proof}[Proof of Proposition \ref{PROP:conv1}]
Without loss of generality, we assume that 
$\phi_ \dl = \P_{ \dl^{-1}} \phi_\dl$
such that we do not need to carry the frequency projector
$\P_{ \dl^{-1}}$ on the initial data.
As before,  we drop the superscript ``$\low$''
and set 
$v_\dl = v_\dl^\low$.
We also set $\phi_0 = \phi$
and use $v_0 = v_{\dl = 0}$ to denote $v$.

Let $\vv$ and $\vv_\dl$
denote the interaction representations
of $v$ and $v_\dl$,  defined by 
\begin{align}
\vv(t) = S_0(-t) v(t)\qquad \text{and}
\qquad 
\vv_\dl(t) = S_\dl(-t) v_\dl(t), 
\label{int1}
\end{align}

\noi
respectively.
In the following, we may use $\vv_0 = \vv_{\dl = 0}$ to denote $\vv$.
Recall that we have
\begin{align}
\begin{split}
\| v\|_{X^{s, b}_0} 
& =
\| \vv\|_{H^{s, b}} 
:= 
\|\jb{\dx}^s \jb{\dt}^b \vv\|_{L^2_{t, x}} , 
\\
\| v_\dl\|_{X^{s, b}_\dl} 
& =
\| \vv_\dl\|_{H^{s, b}} , 
\end{split}
\label{int2}
\end{align}

\noi
where 
 the $X^{s, b}_\dl$-norm is as in 
\eqref{Xsb1}.
As in~\eqref{Xsb1b},
given an interval $J \subset\R_+$, 
 we define the 
local-in-time version of the $H^{s, b}$-space
by setting
 \begin{align*}
\| u\|_{H^{s, b}(J)} =\inf\big\{\| w\|_{H^{s, b}}: w|_{J}=u\big\}.
\end{align*}

\noi
With a slight abuse of notation, 
when $J = [0, T]$, we set 
$H^{s, b}(T) = H^{s, b}([0, T])$.

Let $ s\ge 0$ and fix a target time $T \ge 1$.
Our goal is to prove
\begin{align}
\lim_{\dl\to 0}\|v - v_\dl\|_{C_{T}H^s_x}
= 0.
\label{BBen1}
\end{align}

\noi
By 
Bernstein's inequality, we have 
\begin{align}
\|v - v_\dl\|_{C_{T}H^s_x}
\les\sup_{0\le \dl \le 1} \|\P_N^\perp v_\dl\|_{C_{T}H^s_x} 
+ N^s\|\P_N(v - v_\dl)\|_{C_{T}L^2_x}, 
\label{BBen2}
\end{align}

\noi
uniformly in $0 < \dl \le 1$ and $N > 0$.
Fix small $\eps > 0$.
From Lemma \ref{LEM:equi}, 
there exists $N_1 = N_1(\eps) > 0$
such that 
\begin{align}
 \sup_{0 \le  \dl \le 1}\sup_{0 \le t \le T}
 \| \P_{N}^\perp v_\dl(t)\|_{ H^s} < \eps
\label{BBen3}
\end{align}

\noi
for any $N \ge N_1$.
Thus, \eqref{BBen1} follows 
from \eqref{BBen2} and \eqref{BBen3}, once we prove 
\begin{align}
\lim_{\dl \to 0}\|\P_N(v - v_\dl)\|_{C_{T}L^2_x}= 0
\label{BBen4}
\end{align}

\noi
for each {\it fixed} sufficiently large $N\gg1$.

Hence, we focus on proving \eqref{BBen4} in the following.
Fix $N \gg1$.
From \eqref{int1}, 
we have 
\begin{align}
\begin{split}
\| \P_N(v - v_\dl ) \|_{C_{T}L^2_x}
& \le 
\sup_{t \in [0, T]}\| \P_N(S_0(t) - S_\dl(t))\vv(t)\|_{L^2_x}\\
& \quad + \|\P_N(\vv - \vv_\dl)\|_{C_{T}L^2_x}.
\end{split}
\label{A0a}
\end{align}

\noi
The  first term on the right-hand side of \eqref{A0a}
can be treated as in the proof of \cite[Theorem~1.2]{CLOZ}.
For readers' convenience, we present some details.
Given small $\eps > 0$, it follows from the uniform continuity of $\vv$ on the
compact time interval $[0, T]$
with 
 the unitarity in $L^2(\R)$ of $S_0(t)$ and $S_\dl(t)$
 that there exist $M \in \N$ 
 and $\{t_j\}_{j = 1}^M \subset [0, T]$ such that 
\begin{align}
\begin{split}
&\sup_{t \in [0, T]}\| \P_N (S_0(t) - S_\dl(t) ) \vv(t) \|_{L^2} \\
& \quad
<  \max_{j = 1, \dots, M}\| (S_0(t_j) - S_\dl(t_j) )  \vv(t_j) \|_{L^2} 
+ \frac \eps 2, 
\end{split}
\label{A0b}
\end{align}

\noi
uniformly in $N \gg1 $.
From  the mean value theorem
and \eqref{res13}, we have
\begin{align*}
\big| \Ft_x\big( (S_0(t) -  S_\dl(t) ) f \big) (\xi) \big|
= 
|t|  |\xi^3 - \wt p_\dl(\xi) |
|\ft f(\xi)|
\too 0 
\end{align*}

\noi
as $\dl\to0$
for each fixed $t, \xi \in \R$.
Thus, it follows from 
the dominated convergence theorem
that there exists small $\dl_1  = \dl_1(\eps, T, \phi)> 0$ such that 
\begin{align}
\max_{j = 1, \dots, M}\| (S_0(t_j) - S_\dl(t_j) )  \vv(t_j) \|_{L^2} 
< \frac \eps 2
\label{A0c}
\end{align}

\noi
for any $0 < \dl < \dl_1$.
Due to the use of the dominated convergence theorem, 
$\dl_1$ depends on the profile of the initial data $\phi$
(and not just on its $L^2$-norm).
Hence, from \eqref{A0b} and \eqref{A0c}, 
we conclude that 
\begin{align}
\lim_{\dl \to 0}
\sup_{t \in [0, T]}
\| \P_N (S_0(t) - S_\dl(t))\vv(t)\|_{L^2} = 0, 
\label{A0d}
\end{align}

\noi
uniformly in $N \gg1 $.
Therefore, 
in view of \eqref{A0a} and \eqref{A0d}, 
the desired limit \eqref{BBen4} (and thus Proposition~\ref{PROP:conv1}) follows once we prove 
\begin{align}
\lim_{\dl \to 0}\|\P_N(\vv - \vv_\dl)\|_{C_{T}L^2_x}= 0
\label{XX1}
\end{align}

\noi
for each fixed sufficiently large $N\gg1$.

In the remaining part of the proof, 
we fix
 $\tau \in (0, 1]$
to be  the uniform (in $0 \le \dl \le 1$)  local existence time
in $L^2(\R)$
for a solution $v_\dl$ to \eqref{sILW2}
(and~\eqref{KDV} when $\dl = 0$), 
depicted by Lemma~\ref{LEM:bilin1}
and the boundedness of $\{\phi_\dl\}_{0 \le \dl \le 1}$ in $L^2(\R)$.
Then, in view of the $L^2$-conservation
with~\eqref{bilin1y}
and~\eqref{int2}, 
we have 
\begin{align}
\sup_{0 \le \dl\le 1} \|\vv_\dl\|_{H^{0, \frac 12 + \g}([t_0, t_0 + \tau])} 
\les \sup_{0 \le \dl\le 1} \|\phi_\dl\|_{L^2}\les 1, 
\label{IC0c}
\end{align}

\noi
uniformly in  $t_0 \ge 0$, 
where
 $\g > 0$ is sufficiently small as in Lemma~\ref{LEM:equi}.

Fix small $\eps > 0$.
Then, given small $\eps' > 0$
(to be chosen later in terms of $\eps > 0$), 
it follows from 
Lemma~\ref{LEM:equi} with \eqref{int2} that 
there exists $N_2 = N_2( \eps')> 0$ 
such that 
\begin{align}
 \sup_{0 \le  \dl \le 1}
 \| \P_{N}^\perp \vv_\dl\|_{H^{0, \frac 12 + \g}([t_0, t_0 + \tau])} 
= 
 \sup_{0 \le  \dl \le 1}
 \| \P_{N}^\perp v_\dl\|_{X_\dl^{0, \frac 12 + \g}([t_0, t_0 + \tau])} \le \eps'
\label{IC0b}
\end{align}

\noi
for any $N \ge N_2$ and $t_0 \ge 0$

In the following, we study the contribution 
from the frequencies $\{|\xi|\le N\}$
for $t_0 = 0$.
From \eqref{KDV} and \eqref{sILW2}, we have 
\begin{align}
\begin{split}
\P_N (\vv - \vv_\dl)
& = \P_N (\phi - \phi_\dl)
+ \P_N \NN_0(\vv)
- 
\P_N \NN_\dl(\vv_\dl)\\
& 
= \P_N (\phi - \phi_\dl)
+ \Big(\P_N \NN_\dl(\vv)
- 
\P_N \NN_\dl(\vv_\dl)\Big) 
+  \EE_{N, \dl}(\vv)
\end{split}
\label{ID1}
\end{align}

\noi
for sufficiently small $0 < \dl \ll N^{-1}$, 
where $\NN_0(\uu)$ and $\NN_\dl(\uu)$
are given by 
\begin{align}
\begin{split}
\ft {\NN_0(\uu)}(t, \xi) 
&  =i \xi  
\int_0^t 
\intt_{\xi = \xi_1 + \xi_2}e^{-it' \Xi_\KDV(\bar\xi)}\ft \uu(t', \xi_1)
\ft \uu(t', \xi_2)d\xi_1dt',\\
\ft {\NN_\dl(\uu)}(t, \xi)  
& =
 i \xi  
 \int_0^t 
\intt_{\substack{\xi = \xi_1 + \xi_2\\|\xi_1|, |\xi_2|\le \dl^{-1}}}e^{-it' \wt \Xi_\dl(\bar\xi)}
\ft \uu (t', \xi_1)\ft \uu(t', \xi_2)d\xi_1dt'
\end{split}
\label{ID2}
\end{align}

\noi
with 
$\Xi_\KDV(\bar \xi)$ and 
$\wt \Xi_\dl(\bar\xi)$ as in 
 \eqref{res1}
 and 
\eqref{res9}, respectively, 
and the error term 
$\EE_{N, \dl}(\uu)$ is given by 
\begin{align}
\EE_{N, \dl}(\uu) = \P_N\big( \NN_0(\uu) - \NN_\dl(\uu)\big).
\label{ID3}
\end{align}

Since $\phi_\dl$ converges to $\phi$ in $L^2(\R)$
as $\dl \to 0$, 
there exists small $\dl_2 = \dl_2(\eps) > 0$, 
independent of $N \in \N$,  such that 
\begin{align}
\|\P_N (\phi - \phi_\dl)\|_{H^{0, \frac 12 + \g}(\tau)}
\le C\|\P_N (\phi - \phi_\dl)\|_{L^2}
< \frac \eps 6
\label{ID4}
\end{align}

\noi
for any $0 < \dl < \dl_2$.
From the nonhomogeneous linear estimate, Lemma \ref{LEM:bilin1} 
(both at the level of the interaction representation), 
\eqref{IC0b}, and \eqref{IC0c}, we have, 
for some small $\ta > 0$, 
\begin{align}
\begin{split}
& \|\P_N \NN_\dl(\vv)
- 
\P_N \NN_\dl(\vv_\dl)\|_{H^{0, \frac 12 + \g}(\tau)}\\
& \quad \le CT^\ta 
\Big(\|\P_N (\vv - \vv_\dl) \|_{H^{0, \frac 12 + \g}(\tau)}
+ \|\P_N^\perp (\vv - \vv_\dl) \|_{H^{0, \frac 12 + \g}(\tau)}\Big)\\
& \quad \le C'T^\ta
\Big(\|\P_N (\vv - \vv_\dl) \|_{H^{0, \frac 12 + \g}(\tau)}
+ \eps'\Big)\\
& \quad \le \frac 12 
\|\P_N (\vv - \vv_\dl) \|_{H^{0, \frac 12 + \g}(\tau)}
+ \frac \eps 6, 
\end{split}
\label{ID5}
\end{align}

\noi
where the last inequality follows
from possibly choosing smaller $\tau > 0$
(by a constant factor) and $0 < \eps' = \eps'(\eps) \ll \eps$.

It remains to estimate the error term $\EE_{N, \dl}(\vv)$
in \eqref{ID1} and \eqref{ID3}.
We assume that $\phi\ne 0$ (and hence $\vv \not \equiv 0$)
since $\EE_{N, \dl}(0) = 0$.
By applying Lemma~\ref{LEM:bilin1}
(which also holds for $\dl = 0$)
with \eqref{IC0c}, we have 
\begin{align}
\|\EE_{N, \dl}(\vv)\|_{H^{0, \frac 12 + 2\g}(\tau)}
\les 1, 
\label{ID6}
\end{align}

\noi
uniformly in $0 < \dl \le 1$.
From \eqref{ID3} and \eqref{ID2}, 
we have 
\begin{align}
\EE_{N, \dl}(\vv) = \EE_{N, \dl}^{(1)}(\vv) + \EE_{N, \dl}^{(2)}(\vv), 
\label{ID7}
\end{align}

\noi
where $\EE_{N, \dl}^{(1)}(\vv)$ and $\EE_{N, \dl}^{(2)}(\vv)$ are given by 
\begin{align}
\begin{split}
& \ft {\EE_{N, \dl}^{(1)}(\vv)}(t, \xi) \\
& \quad = \ind_{|\xi|\le N}\cdot
i \xi  
\int_0^t 
\intt_{\xi = \xi_1 + \xi_2}
\Big(e^{-it' \Xi_\KDV(\bar\xi)} - e^{-it' \wt \Xi_\dl(\bar\xi)}\Big)
\ft \vv(t', \xi_1)
\ft \vv(t', \xi_2)d\xi_1dt',\\
& \ft {\EE_{N, \dl}^{(2)}(\vv)}(t, \xi) \\
&\quad = \ind_{|\xi|\le N}\cdot
 i \xi  
\int_0^t 
\hspace{-3mm}
\intt_{\substack{\xi = \xi_1 + \xi_2\\ \max(|\xi_1|, |\xi_2|)> \dl^{-1}}}
e^{-it' \wt \Xi_\dl(\bar\xi)}
\ft \vv (t', \xi_1)\ft \vv(t', \xi_2)d\xi_1dt'.
\end{split}
\label{ID8}
\end{align}

We first treat $\EE_{N, \dl}^{(1)}(\vv)$.
Fix small $\eps'' > 0$.
Then, 
given small $\ta > 0$, 
it follows from Lemma~\ref{LEM:equi}
(which also holds for $\dl = 0$)
that 
\begin{align}
 \|\P_{\dl^{-\frac 25 +\ta}}^\perp \vv\|_{C_{T}L^2_x} \ll N^{-\frac 32}   \eps''
 \|\phi\|_{L^2}^{-1}
\label{ID9}
\end{align}

\noi
for any sufficiently small
$\dl = \dl(\eps'',   N)>0$.

\smallskip

\noi
$\bullet$ {\bf Case 1:}
 $\max\big(|\xi_1|, |\xi_2|\big) \le \dl^{-\frac 25 +\ta}$.
\\
\indent
In this case, we have  $|\xi| \les \dl^{-\frac 25 +\ta}$.
Then, 
from \eqref{Xi1} and \eqref{L3}, we have 
\begin{align}
|\Xi_\KDV (\bar \xi)
-\wt  \Xi_\dl (\bar \xi)|\les \dl^{5\ta}.
\label{ID10}
\end{align}

\noi
Thus, 
by the mean value theorem with \eqref{ID10}, we have 
\begin{align}
\big|e^{-it' \Xi_\KDV(\bar\xi)} - e^{-it' \wt \Xi_\dl(\bar\xi)}\big|
\les T \dl^{5\ta}
\label{ID11}
\end{align}

\noi
for any $0 \le t' \le T$.
Hence, 
from \eqref{ID8}, H\"older's inequality (on the Fourier side), and \eqref{ID11}
with the conservation of the $L^2$-norm, 
we have
\begin{align}
\begin{split}
\|\EE_{N, \dl}^{(1)}(\vv)\|_{C_{\tau_0} L^2_x}
& \le C T \dl^{5\ta} N^\frac 32 \|\vv\|_{C_{T}L^2_x}^2
= C T \dl^{5\ta} N^\frac 32 \|\phi\|_{L^2}^2\\
& <  \eps''
\end{split}
\label{ID12}
\end{align}

\noi
for any sufficiently small
$\dl = \dl(\eps'',   N, T)>0$
and $0 < \tau_0 \le1 $.

\smallskip

\noi
$\bullet$ {\bf Case 2:}
 $\max\big(|\xi_1|, |\xi_2|\big) > \dl^{-\frac 25 +\ta}$.
\\
\indent
Without loss of generality, assume 
$|\xi_1| >  \dl^{-\frac 25 +\ta}$.
Then, 
from \eqref{ID8} and  H\"older's inequality (on the Fourier side)
with \eqref{ID9} and the conservation of the $L^2$-norm, 
we have
\begin{align}
\begin{split}
\|\EE_{N, \dl}^{(1)}(\vv)\|_{C_{\tau_0} L^2_x}
& \le C    N^\frac 32 \| \P^\perp_{\dl^{-\frac 25 + \ta}} \vv\|_{C_{T}L^2_x}
 \|\phi\|_{L^2}\\
& < \eps''
\end{split}
\label{ID13}
\end{align}

\noi
for any $0 < \tau_0 \le1 $.

\medskip

As for $\EE_{N, \dl}^{(2)}(\vv)$, 
we can proceed as in Case 2 above and obtain
\begin{align}
\begin{split}
\|\EE_{N, \dl}^{(2)}(\vv)\|_{C_{\tau_0} L^2_x}
 <  \eps''
\end{split}
\label{ID14}
\end{align}

\noi
for any 
sufficiently small
$\dl = \dl(\eps'',   N)>0$
and 
$0 < \tau_0  \le1 $.
Putting \eqref{ID7}, \eqref{ID12}, \eqref{ID13}, and \eqref{ID14}
together, we conclude that 
\begin{align}
\begin{split}
\lim_{\dl \to 0 } \|\EE_{N, \dl}(\vv)\|_{H^{0, 0}(\tau)} 
& = \lim_{\dl \to 0 } \|\EE_{N, \dl}(\vv)\|_{L^2_\tau L^2_x} 
\le \tau^\frac 12 \lim_{\dl \to 0 } \|\EE_{N, \dl}(\vv)\|_{C_\tau L^2_x} \\
& = 0. 
\end{split}
\label{ID15}
\end{align}

\noi
Finally, 
given small $\eps > 0$, 
it follows from 
interpolating~\eqref{ID6} and \eqref{ID15}
that there exists $\dl_3 = \dl_3(\eps, N, T) > 0$
such that 
\begin{align}
\|\EE_{N, \dl}(\vv)\|_{H^{0, \frac 12 + \g}(\tau)}
< \frac \eps 6
\label{ID16}
\end{align}

\noi
for any $0 < \dl < \dl_3$.

Hence, from \eqref{ID1}, \eqref{ID4}, \eqref{ID5}, and \eqref{ID16}, 
we obtain
\begin{align*}
\|\P_N (\vv - \vv_\dl) \|_{H^{0, \frac 12 + \g}(\tau)}
< \eps
\end{align*}

\noi
for any $0 < \dl < \min(\dl_2, \dl_3)$, 
from which we obtain
\begin{align}
\lim_{\dl \to 0 }
\|\P_N (\vv - \vv_\dl) \|_{C_\tau L^2_x}
\les 
\lim_{\dl \to 0 }
\|\P_N (\vv - \vv_\dl) \|_{H^{0, \frac 12 + \g}(\tau) }=0.
\label{ID17}
\end{align}

\noi
By iterating the  argument, 
we have
\begin{align}
\lim_{\dl \to 0 }
\|\P_N (\vv - \vv_\dl) \|_{H^{0, \frac 12 + \g}([j \tau, (j+1)\tau] \cap [0, T]) }=0
\label{ID18}
\end{align}

\noi
for $j = 1, \dots, \big[\frac {T}\tau \big]$, 
where $[x]$ denotes the integer part of $x \in \R$.
Therefore, from \eqref{ID17} and~\eqref{ID18}, we obtain
\begin{align*}
& \lim_{\dl \to 0 }
 \|\P_N (\vv - \vv_\dl) \|_{C_{T} L^2_x}\\
& \quad \les 
\lim_{\dl \to 0 }
\sum_{j = 0}^{[T/\tau]}
\|\P_N (\vv - \vv_\dl) \|_{H^{0, \frac 12 + \g}([j \tau, (j+1)\tau]) \cap [0, T]) }=0
\end{align*}

\noi
for each $N \ge N_2$, where $N_2 = N_2(\eps') = N_2(\eps)$ is as in 
\eqref{IC0b}.
This proves \eqref{XX1}
and thus 
 concludes the proof of Proposition \ref{PROP:conv1}.
\end{proof}

\subsection{On other frequency interactions}
\label{SUBSEC:low2}

In Subsection \ref{SUBSEC:low1},  we treated the case $\xi_{\max} \le \dl^{-1}$.
In this subsection, we consider several other frequency interactions.

\begin{lemma}[high $\times$ high $\mapsto$ low] \label{LEM:bilin2}
Let $s \ge 0$.
Then, for  $b > \frac 12$ and $b' \le \frac 58 $, we have 
\begin{align}
\big\| \P_{ \dl^{-1}}\dx\big( (\P_{ \dl^{-1}}^\perp v_1) (\P_{ \dl^{-1}}^\perp v_2)\big)\big\|_{X^{s, b'-1}_\dl} 
\les \prod_{j = 1}^2 \| \P_{ \dl^{-1}}^\perp v_j\|_{X^{s, b}_\dl}, 
\label{bilin2}\\
\big\| \P_{ \dl^{-1}}\dx\big( (\P_{ \dl^{-1}}^\perp v)^2\big)\big\|_{X^{s, b'-1}_\dl} 
\les \| \P_{ \dl^{-1}}^\perp v\|_{X^{0, b}_\dl}\| \P_{ \dl^{-1}}^\perp v\|_{X^{s, b}_\dl}, 
\label{bilin2x}
\end{align}

\noi
uniformly in $0 < \dl \le1$.

\end{lemma}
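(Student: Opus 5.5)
We prove \eqref{bilin2}; as in Lemma~\ref{LEM:bilin1}, \eqref{bilin2x} then follows from the case $s=0$ together with $\jb{\xi}^s \les \jb{\xi_1}^s+\jb{\xi_2}^s$. By duality and Plancherel, it suffices to bound the weighted convolution form as in \eqref{mod2}, now with the output restricted to $|\xi|\le\dl^{-1}$ and the inputs restricted to $|\xi_j|>\dl^{-1}$, $j=1,2$. Since $\xi=\xi_1+\xi_2$, we have $|\xi_1|\sim|\xi_2|=\xi_{\max}$ and $|\xi|=\xi_{\min}$. Moreover, because $s\ge0$ and $\jb{\xi}^s\le\jb{\xi_1}^s$, it suffices to treat $s=0$.

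We split according to the size of $\xi_{\max}$.

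\textbf{Case A:} $\dl^{-1}<\xi_{\max}\les \dl^{-1}$. Lemma~\ref{LEM:res}\,(i) still applies in this range, giving $|\wt\Xi_\dl(\bar\xi)|\sim|\xi\xi_1\xi_2|$. We then repeat the proof of Lemma~\ref{LEM:bilin1} verbatim.
\begin{itemize}
\item The sub-case $|\xi-2\xi_1|\ges|\xi|$ uses the change of variables \eqref{BD3}. The bounds \eqref{BX5}--\eqref{BX6} only require $|\xi_j|\les\dl^{-1}$, at the cost of a worse constant in $D_{\dl,k}$. This is a concern: the sign argument for $\partial_{\xi_1}\mu$ needed $|D_{\dl,k}|<6$. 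So in this regime we instead take $|\xi_j|\le C\dl^{-1}$ with $C$ close to $1$, and absorb the rest into Case B by adjusting thresholds.
\item The sub-case $|\xi-2\xi_1|\ll|\xi|$ goes through via the $L^4$-Strichartz estimate \eqref{Str1}.
\end{itemize}

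\textbf{Case B:} $\xi_{\max}\gg\dl^{-1}$. Lemma~\ref{LEM:res}\,(ii) gives
\[
\MAX:=\max(\s,\s_1,\s_2)\ges|\wt\Xi_\dl|\sim \dl^{-1}|\xi|\xi_{\max}\ge \xi^2\xi_{\max}\gg \xi^2\dl^{-1},
\]
where we used $|\xi|\le\dl^{-1}$. So the derivative $|\xi|$ is controlled by the modulation. When $|\xi|\les1$, we use Hölder and \eqref{Str1} directly. Otherwise:
\begin{itemize}
\item If $\MAX=\s$, then $|\xi|/\s^{1-b'}\les |\xi|(\xi^2\xi_{\max})^{-(1-b')}\le |\xi|^{1-3(1-b')}\les1$ for $b'\le\frac23$. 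We conclude with Hölder and \eqref{Str1}.
\item If $\MAX=\s_j$, then $|\xi|/\s_j^{b}\les |\xi|^{1-3b}\les1$. We conclude by duality and \eqref{Str1}, which needs $1-b'\ge\frac38$, i.e.\ $b'\le\frac58$.
\end{itemize}
Here Hölder and Strichartz are applied after discarding the frequency weight, exactly as in Case~2 of Lemma~\ref{LEM:bilin1}. The extra gain $\xi_{\max}\gg\dl^{-1}\ge|\xi|$ only helps.

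The main obstacle is the intermediate regime $\xi_{\max}\sim\dl^{-1}$ in the non-resonant sub-case $|\xi-2\xi_1|\ges|\xi|$. There, neither a clean Jacobian lower bound nor a large modulation is automatic, since $|\xi|$ may be much smaller than $\xi_{\max}$. The first thing to try is to bypass the change of variables in this sub-case. Lemma~\ref{LEM:res} in both regimes gives $\MAX\ges\xi^2\xi_{\max}\ges\xi^2\dl^{-1}$, which still beats $|\xi|^{3}$ since $|\xi|\le\dl^{-1}$. So the modulation argument of Case B should apply uniformly whenever $\xi_{\max}>\dl^{-1}$, making the Jacobian computation unnecessary. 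We will verify this directly from \eqref{LL1}, checking that the implicit constants in Lemma~\ref{LEM:res} are uniform across the transition $\xi_{\max}\approx\dl^{-1}$.
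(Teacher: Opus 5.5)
Your final route is correct and is essentially the paper's proof. You drop the change of variables and use $\MAX \gtrsim |\wt\Xi_\dl(\bar\xi)| \gtrsim \xi^2\xi_{\max} \ge |\xi|^3$, which follows from Lemma~\ref{LEM:res}\,(i) when $\xi_{\max}\lesssim \dl^{-1}$ and from (ii) when $\xi_{\max}\gg\dl^{-1}$. You then treat the sub-cases $\MAX=\s$ and $\MAX=\s_j$ with H\"older and \eqref{Str1}, and handle $|\xi|\lesssim 1$ directly by Strichartz; the paper does the same, using $\MAX\gtrsim \dl^{-1}|\xi\xi_2|$ throughout $|\xi|\gg1$.

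You should delete the Case~A detour through the Jacobian and the vague ``adjusting thresholds'' fix. In the high $\times$ high $\mapsto$ low regime the output frequency is always $\xi_{\min}$, so there is no genuine obstacle at $\xi_{\max}\approx\dl^{-1}$.
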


\begin{proof} %[Proof of Lemma \ref{LEM:bilin2}]
In the following, we only prove \eqref{bilin2}
since \eqref{bilin2x} follows from a straightforward modification.
As in the proof of Lemma \ref{LEM:bilin1}, 
let $\xi$ denote the spatial frequency 
of 
$ \P_{ \dl^{-1}}\dx\big( (\P_{ \dl^{-1}}^\perp v_1) (\P_{ \dl^{-1}}^\perp v_2)\big)$, 
appearing on the left-hand side of \eqref{bilin2}.
We first consider the case $|\xi |\les 1$.
In this case, by H\"older's inequality and  the $L^4$-Strichartz estimate \eqref{Str1}, we have
\begin{align*}
\text{LHS of \eqref{bilin2}}
\les \prod_{j = 1}^2 \| \P_{ \dl^{-1}}^\perp v_j\|_{L^4_{t, x}}
\les \prod_{j = 1}^2 \| \P_{ \dl^{-1}}^\perp v_j\|_{X^{s, b}_\dl}, 
%\label{BDDx}
\end{align*}

\noi
uniformly in $0 < \dl \le 1$, provided that
$b \ge \frac 38$
and 
 $b' \le 1$.

Next, we consider the case $|\xi|\gg 1$.
It suffices to prove %\eqref{BD0}
\begin{align}
\bigg\|
\intt_{\substack{\tau = \tau_1 + \tau_2\\\xi= \xi_1 + \xi_2}}
\frac{\ind_{ 1\ll |\xi|\le  \dl^{-1}} \cdot \xi}{\s^{1-b'}}
\frac{ f_1(\tau_1, \xi_1)f_2(\tau_2, \xi_2) d\tau_1 d\xi_1}
{\s_1^{b}
\s_2^{b}}
\bigg\|_{L^2_{\tau, \xi}}
\les \prod_{j = 1}^2 \|f_j\|_{L^2_{\tau, \xi}}, 
\label{bilin2a}
\end{align}

\noi
uniformly in $0 < \dl \le1$, 
for any functions $f_1, f_2 \in L^2(\R^2)$
such that 
$\supp f_j(\tau_j, \cdot\,) \subset \{|\xi_j|>  \dl^{-1}\}$
for any $\tau_j \in \R$, $j = 1, 2$, 
where $\s$ and $\s_j$, $j = 1, 2$, 
are as in \eqref{mod1}.
For simplicity of notation, 
we drop the frequency restrictions in the following
but it is understood that we work under the condition: 
\begin{align}
1\ll |\xi|\le  \dl^{-1} < |\xi_1|, |\xi_2|.
\label{bilin2aa}
\end{align}

In view of the symmetry, we assume $|\xi_1|\le |\xi_2|$ in the following.
This in particular implies $|\xi_2|\ges |\xi |\gg 1$.
From Lemma \ref{LEM:res}\,(ii) with~\eqref{bilin2aa}, 
we have 
\begin{align}
\MAX  \sim \dl^{-1} |\xi \xi_2|, 
\label{bilin2b}
\end{align}

\noi
where $\MAX$ is as in \eqref{bilin1a}.

\medskip

\noi
$\bul$ {\bf Case 1:}
$\MAX = \s$.
\\
\indent
From \eqref{bilin2b} with \eqref{bilin2aa}, we have 
\begin{align}
\frac{ |\xi|}{\s^{1-b'}}
\les \frac{|\xi|^{b'}}{\dl^{-1 + b'} |\xi_2|^{1-b'}}
\les \frac{|\xi|^{2b'-1}}{ |\xi_2|^{1-b'}}
\les 1,
\label{bilin2c} 
\end{align}

\noi
provided that $b' \le \frac 23 $.
Then, \eqref{bilin2a} 
(and hence \eqref{bilin2})
follows from \eqref{bilin2c}, 
H\"older's inequality (on the physical side), and the $L^4$-Strichartz estimate~\eqref{Str1}, 
provided that $b \ge \frac 38$.

\medskip

\noi
$\bul$ {\bf Case 2:}
$\MAX = \s_j$, $j = 1$ or $2$.
\\
\indent
From \eqref{bilin2b} with \eqref{bilin2aa}, we have 
\begin{align}
\frac{ |\xi|}{\s_j^{b}}
\les \frac{|\xi|^{1-b}}{\dl^{-b} |\xi_2|^{b}}
\les |\xi|^{1-3b}
\les 1,
\label{bilin2d} 
\end{align}

\noi
provided that $b \ge \frac 13  $.
Then, the bound \eqref{bilin2a}
(and hence \eqref{bilin2})
follows from 
duality, polarization,  
\eqref{bilin2d}, 
H\"older's inequality, and the $L^4$-Strichartz estimate~\eqref{Str1}, 
which requires $1 - b', b \ge \frac 38$.
\end{proof}

\begin{remark}\label{REM:low3}\rm
(i)  In view of Lemma \ref{LEM:bilin2}, 
it is also possible to write $v = v_1 + v_2$, 
where 
$v_1$ and $v_2$ satisfy
\begin{align}
\begin{cases}
\dt v_1  -     \Gd   \dx^2 v_1= \P_{ \dl^{-1}} \dx(v_1^2)\\
v_1|_{t = 0} =  \phi
\end{cases}
\label{sILWx1}
\end{align}

\noi
and
\begin{align*}
\begin{cases}
\dt v_2   -     \Gd   \dx^2 v_2= \dx(v_2^2)
+ 2\dx(v_2 v_1)
+  \P_{ \dl^{-1}}^\perp \dx(v_1^2)\\
v_2|_{t = 0} = 0, 
\end{cases}
%\label{sILWx2}
\end{align*}

\noi
respectively.
Here, we did not include the frequency projector
$\P_{ \dl^{-1}}$ on the initial data in~\eqref{sILWx1}, 
since, if we did, then~\eqref{sILWx1} would reduce
to \eqref{sILW2}.
Then, an analogue of Theorem~\ref{THM:1}
with $v^\low$ and $v^\high$ replaced
by $v_1$ and $v_2$, respectively, holds.
Similarly, by a slight modification
of the proof of Lemma \ref{LEM:bilin2}
(with $b' > b > \frac12$, both sufficiently close to $\frac 12$), 
we may include 
the nonlinearity coming from 
$ \dl^{-1} < |\xi| \les |\xi_1|^{1-\ta} \sim |\xi_2|^{1-\ta}$
(for some small $\ta > 0$)
as part of the $v_1$-equation.

\medskip

\noi
(ii)
We may also consider the interaction coming from 
$ \dl^{-1} \ll |\xi| \les |\xi_1| \sim |\xi_2|$
but $|\xi| \gg |\xi_2|^{1-\ta}$
(for some small $\ta > 0$).
In this case, 
from Lemma \ref{LEM:res}\,(ii), we only have
$|\wt \Xi_\dl(\bar \xi)|
\sim    \dl^{-1} \xi_{\min}\xi_{\max}
\sim \dl^{-1} |\xi\xi_{2}|$,
which forces us to take $b = b' = \frac 12$
if we were to proceed as in the proof of Lemma \ref{LEM:bilin2}.
In fact, by working with the $Z_\dl^{s, b}$-norm given by 
\begin{align*}
\|u \|_{Z^{s, b}_\dl} = 
\|u \|_{X^{s, b}_\dl} 
+ 
\|\jb{\xi}^s \jb{\tau - \wt p_\dl(\xi)}^{b- \frac 12} \ft u(\tau, \xi)\|_{L^2_{\xi}L^1_\tau}, 
%\label{Xsb1x}
\end{align*}

\noi
we can prove a bilinear estimate (with $b = b' = \frac 12$)
to treat the nonlinearity coming from 
$ |\xi| \les |\xi_1| \sim |\xi_2|$;
see the proof of 
\cite[Theorem 3.19]{ETz2}
in the KdV case.
If we proceed this way, however, 
there is an issue in establishing 
an analogue of Lemma \ref{LEM:bilin1}
with the $Z^{s, b}$-norm (with $b = b' = \frac 12$).
First of all, the step \eqref{BD2} breaks down.
Moreover, for $|\xi_1|\ll 1 \les |\xi|\sim |\xi_2| \les \dl^{-1}$, 
it follows from Lemma \ref{LEM:res}\,(i) that 
\begin{align*}
\frac{ |\xi|}{\MAX^\frac 12 }
\les \frac{|\xi|}{|\xi\xi_1\xi_2|^\frac 12 }
\sim \frac{1}{|\xi_1|^\frac 12}, 
\end{align*}

\noi
where the right-hand side can be arbitrarily large
due to the low frequency problem particular
to the real line case.
While it may be possible to reconcile the issues
(to include
both 
$\xi_{\max} \les \dl^{-1}$
and
$ |\xi| \les |\xi_1| \sim |\xi_2|$ as part of the ``low frequency'' dynamics), 
we do not pursue this issue further in this paper.
See also Remark \ref{REM:low4}.
\end{remark}

\section{Failure of $C^2$-regularity of the residual dynamics}
\label{SEC:high}

In this section, we present a proof of Theorem \ref{THM:1}\,(ii).
This type of instability was first established by Bourgain \cite{BO97};
see also \cite{Tzv}.
In the following, we closely 
follow the argument in \cite{MST}
for BO \eqref{BO}, 
exploiting 
the `low $\times$ high $\mapsto$ high' interaction.
We present details for readers' convenience
(and to reflect the difference between our problem and that in \cite{MST}).

Fix $\phi \in \S(\R)$.
Given small $\eps > 0$,
let  $v  = v(\eps)$ be the smooth solution to the scaled ILW~\eqref{sILW1}
with initial data $v|_{t = 0} = \eps \phi$.
Then, by writing  it as
$v = v^\low + v^\high$ as in \eqref{sILW1a}, 
a direct computation with \eqref{sILW4}
(where $\phi$ is replaced by $\eps \phi$)
gives
\begin{align}
\begin{split}
v^\low(\eps)|_{\eps = 0} & = 0, \\
\dd_\eps v^\low(\eps)|_{\eps = 0} 
& =  \P_{ \dl^{-1}} S_\dl(t) \phi.
%\dd_\eps^2 v^\low(\eps)|_{\eps = 0} 
%& = \I_\dl\Big( \P_{ \dl^{-1}} (\P_{ \dl^{-1}} S_\dl(t) \phi)^2\Big).
\end{split}
\label{der1}
\end{align}

\noi
Similarly,
by writing 
the residual dynamics \eqref{sILW3} in the Duhamel formulation:
\begin{align}
\begin{split}
v^\high(t) 
& =  \P_{ \dl^{-1}}^\perp S_\dl(t) \phi
+ \I_\dl\big( (v^\high)^2\big)(t)\\
& \quad 
+ 2 \I_\dl( v^\high v^\low)(t)
+ \P_{ \dl^{-1}}^\perp \I_\dl\big( (v^\low)^2\big)(t), 
\end{split}
\label{sILW5}
\end{align}

\noi
a direct computation with \eqref{sILW5}
(where $\phi$ is replaced by $\eps \phi$)
and \eqref{der1}
yields
\begin{align}
\begin{split}
v^\high(\eps)|_{\eps = 0} & = 0, \\
\dd_\eps v^\high(\eps)|_{\eps = 0} 
& =  \P_{ \dl^{-1}}^\perp S_\dl(t) \phi,\\
\dd_\eps^2 v^\high(\eps)|_{\eps = 0} 
& = 
\I_\dl\Big( (\P_{ \dl^{-1}}^\perp S_\dl(t) \phi)^2\Big)\\
& \quad + 2 \I_\dl\Big( \P_{ \dl^{-1}} S_\dl(t) \phi \cdot \P_{ \dl^{-1}}^\perp S_\dl(t) \phi\Big)\\
& \quad + \I_\dl\Big( \P_{ \dl^{-1}}^\perp \big((\P_{ \dl^{-1}} S_\dl(t) \phi)^2\big)\Big).
\end{split}
\label{der2}
\end{align}

Fix \(t\in \R\backslash\{0\}\), $s \in \R$, and $0 < \dl \le 1$.
Given a positive integer $N \gg \dl^{-1} \ge 1$ and small  $\al = \al(\dl, N)> 0$ such that 
\begin{align}
\dl^{-1} \al N \sim N^{-\ta}
\label{ill1}
\end{align}

\noi
 for some small $\ta > 0$, 
 define $\phi = \phi(N, \al)$ by its Fourier transform:
 \begin{align}
\ft \phi (\xi) = \al^{-\frac 12 }
\Big\{\ind_{I_1}(\xi) 
+  N^{-s}\cdot \ind_{I_2}(\xi) + 
\ind_{-I_1}(\xi)  
+  N^{-s}\cdot \ind_{-I_2}(\xi) \Big\}, 
 \label{ill2}
 \end{align}
 
\noi
where $I_1$ and $I_2$ are given by 
\begin{align}
I_1 = [\al, 2\al]
\qquad \text{and}\qquad
I_2 = [N, N+\al].
\label{ill3}
\end{align}

\noi
Note that 
\begin{align}
\| \phi \|_{H^s} \sim 1.
\label{ill4}
\end{align}

% Given an interval $I \subset \R$, let $\P_I$ be the (spatial)
% frequency projector onto $\{\xi \in I\}$ defined by 
%\[\ft{\P_I f}(\xi) = \ind_I(\xi) \cdot \ft f(\xi) .\]

From \eqref{ill3}, we have $I_1 + I_2 = [N + \al, N+3\al]$.
Then, from \eqref{der2}, \eqref{ill2}, and \eqref{ill3}
with \eqref{sILW4a}, \eqref{res10}, and \eqref{res9}, we have
\begin{align}
\begin{split}
 \ind_{I_1 + I_2} \cdot \F_x \big(\dd_\eps^2 v^\high(t; \eps)|_{\eps = 0} \big)
(\xi)
 =-  \frac{2\xi e^{i t \wt p_\dl(\xi)}}{\al N^s}
 \intt_{\substack{\xi = \xi_1+ \xi_2\\
 \xi_1 \in I_1\\\xi_2 \in I_2}}
 \frac{e^{-i t \wt \Xi_\dl(\bar \xi)} - 1}{\wt \Xi_\dl(\bar \xi)}
 d\xi_1 .
 \end{split}
\label{ill5}
\end{align}

From
Lemma \ref{LEM:res}\,(ii) and \eqref{ill1}, we have 
\begin{align*}
|\wt \Xi_\dl(\bar \xi)|\sim \dl^{-1} |\xi_1 \xi_2|
\sim N^{-\ta}.
%\label{ill6}
\end{align*}

\noi
Then, by a Taylor expansion, we have
\begin{align}
-  \frac{e^{- i t \wt \Xi_\dl(\bar \xi)} - 1}{\wt \Xi_\dl(\bar \xi)}
 = it + O(t^2 N^{-\ta})
= it + o(1),  
 \label{ill7}
\end{align}

\noi
as $N \to \infty$.
Recall from 
\cite[Lemma 3.5]{OhFE}
that 
\begin{align}
\ind_{a + I}* \ind_{b + I} (\xi)\ges |I| \cdot \ind_{a+b+I}(\xi)
\label{ill8}
\end{align}
		
\noi
for any $a, b, \xi \in \R$ and 
any interval $I \subset \R$.
Finally, if we suppose that the  map: $\phi \in H^s(\R)\mapsto
v^\high (t) \in H^s(\R)$ is $C^2$, 
then it follows
 from \eqref{ill4}, \eqref{der2}, \eqref{ill5}, 
and \eqref{ill7} with \eqref{ill8} and \eqref{ill1} that
\begin{align*}
1\sim \|\phi\|_{H^s}^2 \ges
\big\|\dd_\eps^2 v^\high(t; \eps)|_{\eps = 0} \big\|_{H^s}
\ges_t \frac{N^{1+s} \al^\frac 32} {\al N^s}
= \al^{\frac 12} N
\sim \dl^{\frac 12} N^{\frac {1-\ta} 2}
\too \infty, 
\end{align*}

\noi
as $N \to \infty$ (for each fixed $0 < \dl \le 1$
and $t \in \R\setminus\{0\}$),
which is a contradiction.
This proves 
Theorem \ref{THM:1}\,(ii).

%%%%%%%%%%%%%%%%%%%%%%%%%%%%%%%%%%%%%%
%%%%%%%%%%%%%%%%%%%%%%%%%%%%%%%%%%%%%%
%%%%%%%%%%%%%%%%%%%%%%%%%%%%%%%%%%%%%%
%%%%%%%%%%%%%%%%%%%%%%%%%%%%%%%%%%%%%%
\begin{ackno}\rm
A.\,C.~was supported by  CNRS-INSMI through a grant ``PEPS Jeunes chercheurs et jeunes chercheuses 2025''.
B.\,H.-G. was supported by NSF grant DMS-2406816.
G.\,L. was supported by the NSFC (grant no.~12501181).
T.\,O.~was supported by the European Research Council (grant no.~864138 ``SingStochDispDyn")
and
acknowledges support from  
the NSFC (grant no.~W2531005).

\end{ackno}

\end{document}